%
%
\documentclass{llncs}
\usepackage{indentfirst}
\setlength{\parindent}{2em}
\usepackage{graphicx}
\usepackage{amsmath}
\usepackage{chngcntr}
\pagestyle{plain}

\spnewtheorem{theorem1}{ }{\bfseries}{\itshape}
\counterwithin{theorem1}{section}
\spnewtheorem{lem}[theorem1]{Lemma}{\bfseries}{\itshape}
\spnewtheorem{thm}[theorem1]{Theorem}{\bfseries}{\itshape}
\spnewtheorem{pro}[theorem1]{Proposition}{\bfseries}{\itshape}
\spnewtheorem{cor}[theorem1]{Corollary}{\bfseries}{\itshape}
\spnewtheorem{defi}[theorem1]{Definition}{\bfseries}{\itshape}
\spnewtheorem{rem}[theorem1]{Remark}{\bfseries}{\itshape}
\spnewtheorem{exa}[theorem1]{Example}{\bfseries}{\itshape}

\begin{document}

\title{SEPARATION IN SIMPLY LINKED NEIGHBOURLY 4-POLYTOPES}
\titlerunning{Hamiltonian Mechanics}  
%
\author{T.BISZTRICZKY}
\institute{  }

\maketitle              
\thispagestyle{plain}
\begin{abstract}
The Separation Problem asks for the minimum number $s(O,K)$ 
of hyperplanes required to strictly separate any interior point $O$ 
of a convex body $K$ from all faces of $K$. The Conjecture is $s(O,K)$
$\le 2^d$ in $\bbbr^d$, and we verify this for the class of 
simply linked neighbourly 4-polytopes.\\  \  \\ \textit{2010 MSC}: 52B11, 52B05.\\
\textbf{keywords:} Gohberg-Markus-Hadwiger covering number, neighbourly polytope, separation problem
\end{abstract}
\section{INTRODUCTION}
We recall that the Separation Problem is the polar version of the Gohberg-Markus-Hadwiger Covering Problem for convex bodies, and refer to [2], [6] and [9] for an overview of the topic.

For convex d-polytopes $P$, the Conjecture has been verified in the case that $P$ is cyclic or a type of neighbourly 4-polytope( totally-sewn or with at most ten vertices). We refer to [3] for an overview of these results.

In the following, we assume that $P$ is a neighbourly 4-dimensional polytope in $\bbbr^4$. Then $P$ is convex and any two distinct vertices determine an edge of $P$. We refer to [8] and [12] for the basic geometric and combinatorial properties of $P$.

With formal definitions to follow; we note only that cyclic polytopes are neighbourly and totally-sewn, and that totally-sewn $P$ are linked. Thus, we verify the Conjecture for a new class of $P$.

As for organization: Section 2 contains definitions and conventions. In Section 3, we examine the inner structure of $P$. In Section 4, we determine some separation properties of P. We introduce simply linked $P$ and present our separation results in Section 5 and 6.

\section{DEFINITIONS}
Let $Y$ be a set of points in $\bbbr^d$. Then $conv\  Y$ and \textit{aff} $Y$ denote, respectively, the convex hull and the affine hull of $Y$. For sets $Y_1,Y_2,\cdots,Y_k$, let
\begin{eqnarray*}
[Y_1,Y_2,\cdots,Y_k]= conv (Y_1\cup Y_2 \cup ... \cup Y_k)
\end{eqnarray*}
and $\langle Y_1,Y_2,\cdots,Y_k \rangle =$ \textit{aff} $(Y_1\cup Y_2 \cup \cdots \cup Y_k)$. For a point $y$, let $[y]=[\lbrace y\rbrace]$ and $\langle y \rangle =\langle \lbrace y \rbrace \rangle$.

Let $Q\in \bbbr^d $denote a (convex) d-polytope with $\mathcal{V}(Q),\mathcal{E}(Q)$ and $\mathcal{F}(Q)$ denoting, respectively, its sets of vertices, edges and facets. For $x\in \mathcal{V}(Q),\  Q\slash _x$ denotes the vertex figure of Q at x. For $E=[x,y]\in \mathcal{E}(Q),\  Q\slash _E$ denotes the quotient polytope $(Q\slash _y)\slash _x$. We note that $Q\slash_E$ is a $(d-2)$-polytope.

Let $d=4$. As a simplification, we assume always that $Q\slash _x$ is contained in a hyperplane $H\subset \bbbr^4$ that strictly separates $x$ from each $y\in \mathcal{V}(Q) \slash \lbrace x \rbrace$, and denote $H \cap [x,y]=H \cap \langle x,y \rangle$ also by $y$. Then of importance here are the following:
\begin{theorem1}
For $y_i\in \mathcal{V}(Q) \slash \lbrace x \rbrace $; a plane $\langle y_1,y_2,y_3 \rangle $ separates $y_4$ and $y_5$ in $\langle Q \slash _x \rangle $ if, and only if, the hyperplane $\langle x,y_1,y_2,y_3 \rangle$ separates $y_4$ and $y_5$ in $\bbbr^4$, and
\end{theorem1}

\begin{theorem1}
For $y_i \in \mathcal{V}(Q) \slash E$; a line $\langle z_1,z_2 \rangle$ separates $z_3$ and $z_4$ in $\langle  Q \slash _E \rangle $ if, and only if the hyperplane$\langle E,z_1,z_2 \rangle$ separates $z_3$ and $z_4$ in $\bbbr^4$.
\end{theorem1}

Let $S\subset \bbbr^3$ be a 3-polytope with $s\geq 4$ vertices. Then $S$ is \textit{stacked} if  either $s=4$ or $S$ is the convex hull of a stacked 3-polytope with $s-1$ vertices and a point in $\bbbr^3$ that is beyond exactly one facet of $S$.

Let $S$ be stacked, $\lbrace x,y,z \rbrace \subset \mathcal{V}(S)$ and $C=[x,y,z]$ be a triangle. We say that C is a \textit{cut} of $S$ if $\mathcal{E}(C) \subset \mathcal{E}(S)$ but $C\notin \mathcal{F}(S)$. All the cuts of $S$ decompose $S$ into \textit{components}, each of which is a 3-simplex. We note that $| \mathcal{V}(S)|=s$ yields that $S$ has $s-4$ cuts and $s-3$ components.

Let $\mathcal{N}_m$ denote the family of combinatorially distinct neighbourly 4-polytopes with $m\geq 5$ vertices, $P\in \mathcal{N}_{m+1},x\in\mathcal{V}(P)$ and $Q=[\mathcal{V}(P)\backslash \lbrace x\rbrace]$. We note that $Q\in \mathcal{N}_m$.

The relevance of stacked 3-polytopes  here is the following result in [1]:

\begin{theorem1}
$P\slash _x$ is a stacked 3-polytope with $m$ vertices; furthermore, $[y_1,y_2,y_3,y_4]$ is a component of $P\slash _x$ if, and only if, $[y_1,y_2,y_3,y_4]\in \mathcal{F}(Q)\backslash \mathcal{F}(P)$. Hence, $x$ is beyond exactly $m-3$ facets of $Q$.
\end{theorem1}

Next, let $E=[x,y]\in \mathcal{E}(P)$. Then $E$ is a \textit{universal edge} of $P$ if $[E,z]$ is a 2-face of $P$ for each $z\in \mathcal{V}(P)\slash \lbrace x,y \rbrace$. Let $\mathcal{U}(P)$ denote the \textit{set of universal edges} of $P$. We observe from [12] and [13] that 

\begin{theorem1}
$E=[x,y]\in \mathcal{U}(P)$ if, and only if, x and y lies on the same side of every hyperplane determined by the vertices of $P$. From the same sources; if $| \mathcal{V}(P) \geq 7$ then any vertex of $P$ is on at most two members of $\mathcal{U}(P)$, and $|\mathcal{U}(P)|\leq | \mathcal{V}(P) |$.
\end{theorem1}

We recall that a \textit{cyclic 4-polytope} $C_m$ with $m$ vertices is combinatorially equivalent to the convex hull of $m$ points on the moment curve in $\bbbr^4$. From [7], [8] and [12], we note that $C_m\in \mathcal{N}_m$, $ \mathcal{N}_6=\lbrace C_6 \rbrace$, $ |\mathcal{U}(C_6)|=9$, $\mathcal{N}_7=\lbrace C_7 \rbrace$, $ |\mathcal{U}(C_m)|=m$ for $m\geq 7$, and any 4-subpolytope of $C_m$ is again cyclic. For $m\geq 6$, there is a natural ordering (Gale's Evenness Condition) of $\mathcal{V}(P_m)$ that corresponds to the order of appearance of equivalent points on the moment curve.

Let $m \geq 8$. Most of our knowledge about members of $\mathcal{N}_m$ is based upon various \textit{construction techniques}: given $Q\in \mathcal{N}_{m-1}$, find a point $\bar{x}\in \bbbr^4 \slash Q$ such that $\bar{Q}=[Q,\bar{x}]\in \mathcal{N}_m$. It is noteworhy that, at present, known constructions such as Shemer Sewing, Extended Sewing and Gale Sewing(cf. [12], [10] and [11]) yield that $\mathcal{U}(\bar{Q})\backslash \mathcal{U}(Q)\neq \emptyset$. We introduce a class of polytopes to reflect this fact.

Let $n\geq 7$ and $P_n\in \mathcal{N}_n$. We say that $P_n$ is \textit{linked} if for $m=n-1,\cdots,6$, there is a $P_m\in \mathcal{N}_m$ with the property that
\begin{eqnarray*}
P_{m+1} \supset P_{m} \  and\   \mathcal{U}(P_{m+1})\backslash \mathcal{U}(P_m)\neq \emptyset
\end{eqnarray*}
We say that $P_n$ is \textit{linked under the (vertex) array} $x_n>x_{n-1}>\cdots >x_1$ if for $m=n-1,\cdots,6$,
\begin{eqnarray*}
P_m=[x_m,x_{m-1},\cdots,x_1] \  and\  \mathcal{U}(P_{m+1})\backslash \mathcal{U}(P_m)\neq \emptyset 
\end{eqnarray*}
For $x_t\in \lbrace x_7,\cdots,x_n \rbrace$ and $x_r\in \lbrace x_1,\cdots,x_{t-1} \rbrace$, we say that $x_t$ \textit{is linked to} $x_r\  (x_t \rightarrow x_r)$ if $[x_t,x_r]\in \mathcal{U}(P_t)$ and $[x_t,x_j]\notin \mathcal{U}(P_t)$ for $j > max\lbrace 6,r \rbrace$.

By way of clarification for requiring that $t\geq 7$; we note that
\begin{theorem1}
$P_6$ is cyclic and there are disjoint three element subsets $Y$ and $Z$ of $\mathcal{V}(P_6)$ such that $\mathcal{U}(P_6)=\lbrace[y,z] | y\in Y \  and\   z\in Z\rbrace$. Thus, there is no meaningful labeling of a greatest or a least vertex of $P_6$
\end{theorem1}
\section{THE INNER STRUCTURE OF $P$}

Let $v\in \mathcal{V}(P),\  Q\subset P,\  v\notin Q ,\  Q\in \mathcal{N}_m$ and $R=[Q,v]$. We recall that \\$R^*=R\slash _v$ is a stacked 3-polytope and that $y\in \mathcal{V}(Q)$ denotes also $\lbrace y^* \rbrace=\langle v,y \rangle \cap \langle R^* \rangle$. We describe $R^*$.

Let
\begin{equation*}
Y_a=\lbrace y_1,y_2,\cdots,y_a\rbrace,\  z\in \mathcal{V}(Q) \backslash Y_a,
\end{equation*}

\begin{equation*}
    Z_t=\lbrace z| \langle v,y_1,y_t,y_{t+1}\rangle strictly\  separates\  y_2\  and\  z\rbrace
\end{equation*}
and 
\begin{equation*}
    Z'_t=\lbrace z| \langle v,y_2,y_t,y_{t+1}\rangle strictly\  separates\  y_1\  and\  z\rbrace
\end{equation*}
From 2.1, we have that
\begin{itemize}
\item $Z_t\neq \emptyset (Z'_t \neq \emptyset)$  if  and  only  if, $ [y_1,y_t,y_{t+1}]\  ([y_2,y_t,y_{t+1}])$  is  a  cut  of $ R^*$.
\end{itemize}
Hence, we have a generic description of $R^*$; cf. the Schlegel diagram in Figure 1.

Next, we observe from 2.3 and 2.1 that
\begin{itemize}
\item $\langle y_1,y_2,y_t,y_{t+1} \rangle$  separates $ v$  and $Q$  for $ t=3,\cdots,a-1$, and
\item  $\langle v,y_1,y_2,y_{t} \rangle$  separates $ Z_r\cup Z'_r(r<t)$  and $ Z_s\cap Z'_s(s\geq t)$  for $ t=4,\cdots,a-1.$
\end{itemize} 

From 2.2, we depict these separation properties with respect to $Q\slash _{[y_1,y_2]}$ and $R\slash _{[y_1,y_2]}$ in Figure 2.\\  \\ \textbf{REMARKS} Let $R^*=R\slash_v$ be labeled as above.

\begin{figure}[htbp]                                                         
\centering\includegraphics[width=4in]{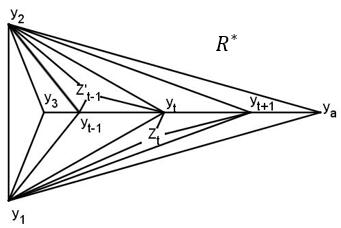}       
\caption{ }\label{fig:1}                                                     
\end{figure}                                                                     
\begin{figure}[htbp]                                                         
\centering\includegraphics[width=4in]{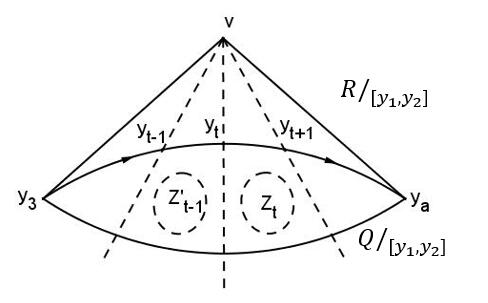}       
\caption{ }\label{fig:2}                                                     
\end{figure}                                                                     

\begin{theorem1}
If $a=m$ then $\mathcal{F}(Q)\backslash \mathcal{F}(R)=\lbrace [y_1,y_2,y_t,y_{t+1}]|\  t=3,\cdots,a-1\rbrace,\  [y_1,y_2]\in \mathcal{U}(Q)$ and $\lbrace [v,y_1],[v,y_2] \rbrace \subset \mathcal{U}(R).$
\end{theorem1}

There is such a labeling of $R^*$ if $R$ is cyclic or if $R$ is constructed by a Shemer sewing of $v$ onto $Q$.

\begin{theorem1}
If $w\in \mathcal{V}(P)\backslash \mathcal{V}(Q)$ and $[w,v]\in \mathcal{U}([Q,v,w])$ then $\mathcal{F}(Q)\backslash \mathcal{F}([Q,w])=\mathcal{F}(Q)\backslash \mathcal{F}(R)$  by 2.4.
\end{theorem1}

\begin{theorem1}
Let $3<t<a$. If $\langle v,y_1,y_2,y_t\rangle$ strictly separates vertices $p_t$ and $s_t$ of $Q$ then $[p_t,s_t]$ is not an edge of $R^*$, and $[v,p_t,s_t]$ is not a face of $R$.
\end{theorem1}

We note from Figure 2 that under the hypotheses of 3.3, each hyperplane through $\langle y_1,y_2,y_t\rangle$ strictly separates some two of $v$, $p_t$ and $s_t$. Thus, the following is the more general result; cf. [5].

\begin{theorem1}
If $\lbrace x_a,x_b,x_c,x_e,x_f,x_g\rbrace$ is a set of six vertices of $P$ and each hyperplane of $\bbbr^4$ though $\lbrace x_a,x_b,x_c\rbrace$ strictly separates two of $x_e,x_f$ and $x_g$, then $[x_a,x_b,x_c]$ and $[x_e,x_f,x_g]$ are not faces of $P$.
\end{theorem1}


\section{GENERIC SEPARATION PROPERTIES OF P}
Let $P\in \mathcal{N}_m,m\geq 6$, and $O$ be an interior point of $P$. We determine hyperplanes $H\in \bbbr^4$ that strictly separate $O$ from facets of $P$. As a simplification, we determine $H$ that do not contain $O$. We consider first $F\in \mathcal{F}(P)$ that either are contained in a subpolytope $Q$ such that $O\notin int\  Q$ or have a common vertex $w$.

\begin{lemma}
( cf. [4] ) \textit{ Let $O\in bd(Q)$. Then $O$ is strictly separated from any $F\in \mathcal{F}(P)\cap \mathcal{F}(Q)$ by one of at most three hyperplanes.}
\end{lemma}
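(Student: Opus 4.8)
The plan is to exploit the fact that $Q$ is a convex $4$-polytope containing $O$ on its boundary, so $O$ lies in the relative interior of some proper face $G$ of $Q$, and then to build the separating hyperplanes from supporting hyperplanes of $Q$ that ``see'' all of $\mathcal{F}(P)\cap\mathcal{F}(Q)$ at once. First I would note that every $F\in\mathcal{F}(P)\cap\mathcal{F}(Q)$ is a facet of $Q$ as well; since $Q\in\mathcal{N}_m$ is a neighbourly $4$-polytope, each such $F$ is a $3$-simplex. The key case split is on the dimension of the carrier face $G\ni O$: if $\dim G=3$ then $G$ itself is a facet of $Q$ and a slight inward push of the supporting hyperplane of $Q$ at $G$ strictly separates $O$ from every other facet of $Q$, hence from every member of $\mathcal{F}(P)\cap\mathcal{F}(Q)$ except possibly $G$ itself — but $G\in\mathcal{F}(P)$ would force $O\in F$ for that $F=G$, which is excluded since we want strict separation, so one hyperplane handles all but at most that one face, and a second (a supporting hyperplane of $P$ at a facet of $P$ adjacent to $G$, pushed slightly) handles $G$ if $G\in\mathcal{F}(P)$. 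If $\dim G\le 2$, the idea is to use the vertex figure / quotient structure: $O$ lies on a face spanned by at most three vertices of $Q$, and by $2.1$ and $2.2$ the separation of $O$ from the facets of $Q$ containing $G$ can be read off in the vertex figure $Q/_x$ (a stacked $3$-polytope, by the result of [1]) or the quotient $Q/_E$, where in the lower-dimensional picture $O$ maps to a boundary point of a stacked polytope or a polygon; the facets of $Q$ not containing $G$ are already strictly separated from $O$ by pushing in the supporting hyperplane at the nearest facet of $Q$ containing $G$.

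The second main step is the counting: why \emph{three} hyperplanes suffice. Here I would organize by the three possible carrier dimensions. For $\dim G=3$: at most two hyperplanes as above. For $\dim G=2$ ($G$ a $2$-face, hence a triangle since $Q$ is simplicial): exactly two facets of $Q$ contain $G$; one supporting hyperplane of $Q$, pushed in, strictly separates $O$ from all facets of $Q$ not among these two, and then a single further hyperplane — chosen through the relative interior of the edge of $Q/_G$-type configuration, i.e. a hyperplane of $\bbbr^4$ containing $\langle G\rangle$ and tilted — strictly separates $O$ from both facets containing $G$ simultaneously, because in the $2$-dimensional quotient $Q/_G$ the point $O$ is a boundary point of an edge and a single line through $O$ separates it from the two vertices. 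That gives two hyperplanes. For $\dim G\le 1$ ($G$ a vertex or an edge of $Q$): use $Q/_x$ or $Q/_{[x,y]}$; the facets of $Q$ through $G$ correspond, via $2.1$/$2.2$, to faces of a stacked $3$-polytope or a polygon, and $O$ is an interior point of a low-dimensional face there. One hyperplane from a supporting hyperplane of $Q$ handles the facets missing $G$; for the facets through $G$, in $Q/_x$ (a stacked $3$-polytope) the point $O$ lies in the boundary of a component or on a cut, and the decomposition into $3$-simplex components together with the structure of cuts lets two further hyperplanes (obtained by lifting planes of $\langle Q/_x\rangle$ back to $\bbbr^4$ via $2.1$) strictly separate $O$ from all remaining facets — for a total of at most three.

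The main obstacle I anticipate is the bookkeeping in the case $\dim G\le 1$: there one must argue that although $G$ may lie on many facets of $Q$ (the degree of a vertex in a neighbourly $4$-polytope can be large), the stacked structure of $Q/_x$ from the result of [1] collapses the separation problem for all these facets into one about a stacked $3$-polytope, where the number of hyperplanes needed to separate a boundary point from all faces is small because each component is a simplex. Concretely, the subtlety is to check that the ``push-in'' hyperplane and the two lifted planes can be chosen so that none of them passes through $O$ (we explicitly want $O$ not on $H$) while still achieving strict separation from \emph{every} facet in $\mathcal{F}(P)\cap\mathcal{F}(Q)$, including the ones that may themselves be facets of $P$. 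I would resolve this by first separating the ``far'' facets (those not through the carrier face of $O$) with one generic push-in hyperplane, and then treating the bounded local configuration around $G$ — which is combinatorially a stacked polytope or a polygon — with at most two more hyperplanes by a direct inspection of the $s-3$ simplex components and the cuts of the stacked $3$-polytope, quoting $2.1$ and $2.2$ to pass between $\bbbr^4$ and the quotient.
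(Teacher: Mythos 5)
The paper does not actually prove Lemma A: it is imported from [4] with the citation ``(cf.\ [4])'' and no argument is given. So your proposal can only be judged on its own merits, and as written it has a genuine gap.

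The load-bearing step in every one of your cases --- that a slightly pushed-in translate of a supporting hyperplane of $Q$ at the carrier face $G$ strictly separates $O$ from all the ``far'' facets, i.e.\ all facets of $Q$ not containing $G$ --- is false. If $H$ supports $Q$ with $Q\cap H=G$ and $F$ is a facet of $Q$ that meets $G$ in a proper face (shares a vertex, an edge or a $2$-face with $G$ without containing $O$), then $F\cap H\neq\emptyset$, so after translating $H$ inward those points of $F$ lie on the same open side as $O\in G\subset H$; no translate of $H$ strictly separates $O$ from such an $F$. In a neighbourly $4$-polytope nearly every facet shares a vertex with $G$, so the push-in hyperplane disposes only of the facets \emph{disjoint} from $G$, and the whole difficulty of the lemma --- the reason the answer is three rather than one --- sits precisely in the facets that touch $G$ without containing $O$, for which your outline supplies no mechanism. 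The rest of the case analysis compounds this: in the case $\dim G=3$ you spend your second hyperplane on $G$ itself, which is a non-issue, since a facet containing $O$ cannot belong to $\mathcal{F}(P)$ (because $O$ is interior to $P$) and in any event cannot be strictly separated from $O$ by any hyperplane; in the case $\dim G=2$ you propose to ``strictly separate'' $O$ from the two facets of $Q$ containing $G$, which is impossible for the same reason and equally unnecessary. A repairable route would start from the observation that every relevant $F$ omits at least one vertex $g_i$ of the simplex $G$, and that $O$ in the relative interior of the face $G$ forces $O\notin[\mathcal{V}(Q)\setminus\lbrace g_i\rbrace]$, so that a single hyperplane strictly separates $O$ from \emph{all} facets missing $g_i$ at once; the real content of the lemma is then to show that three such hyperplanes, rather than $\dim G+1\leq 4$, always suffice for the facets lying in $\mathcal{F}(P)\cap\mathcal{F}(Q)$. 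Your proposal neither handles the adjacent facets nor accounts for the number three.
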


\begin{lemma}
\textit{Let $w\in \mathcal{V}(P), R\in \mathcal{N}_{m-1},P=[R,w]$ and $F\in \mathcal{F}(P)$ such that $w\in F$. Then $O$ is strictly separated from any such $F$ by one of at most four(six) hyperplanes in case $O$ is (is not) an interior point of $R$.}
\end{lemma}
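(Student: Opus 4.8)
The plan is to handle the facets of $P$ containing the fixed vertex $w$ by passing to the vertex figure $P/_w$ and separating there, then lifting back to $\bbbr^4$ via 2.1. Since $P = [R,w]$ with $R \in \mathcal{N}_{m-1}$, Theorem 2.3 tells us that $P/_w$ is a stacked $3$-polytope on $m-1$ vertices, and a facet $F$ of $P$ contains $w$ exactly when the corresponding face of $P/_w$ is a $2$-face of $P/_w$; the facets $[y_1,y_2,y_t,y_{t+1}]$ beyond $w$ (equivalently in $\mathcal{F}(R)\setminus\mathcal{F}(P)$) are precisely the ones \emph{not} meeting $w$. So the facets $F \ni w$ of $P$ are in bijection with the facets of the stacked $3$-polytope $S := P/_w$. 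I would introduce the hyperplane $H_w$ from our standing convention (the one separating $w$ from the other vertices), work inside $\langle S\rangle \cong \bbbr^3$, and note that by 2.1 a plane in $\langle S\rangle$ separating a point from a $2$-face of $S$ lifts to a hyperplane in $\bbbr^4$ separating (the image of) $O$ from $F$.

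The key step is then a counting lemma for stacked $3$-polytopes: \emph{the image point $O^*$ of $O$ in $\langle S\rangle$ can be strictly separated from all $2$-faces of a stacked $3$-polytope $S$ by at most four planes.} Here $O^*$ denotes the intersection of $\langle w, O\rangle$ (or the appropriate cone) with $\langle S\rangle$; one must first argue that $O^*$ is a well-defined point strictly inside $S$ — this is where the hypothesis $O \in \operatorname{int} R$ enters (it guarantees $O^*$ lies interior to $S$, giving the bound four), whereas if $O \notin \operatorname{int} R$ the ray may exit through a component that $w$ sees, forcing an extra pair of hyperplanes and giving the bound six. For the interior case: recall a stacked $3$-polytope decomposes along its cuts into $(m-1)-3$ tetrahedral components glued in a tree pattern, and its $2$-faces are exactly the non-cut triangles $[y_i,y_j,y_k] \in \mathcal{F}(S)$. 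Given an interior point $O^*$, I would pick two ``extreme'' directions: the triangular faces fall into those visible from $O^*$ looking ``up'' versus ``down'' along a generic line through $O^*$, and within each family a single supporting-type plane tilted slightly off a well-chosen face can be pushed to strictly separate $O^*$ from an entire visible family. Two families, two planes each to be safe — but using that $S$ is stacked (hence its facet graph is very restricted: each cut separates $S$ into two stacked pieces), one shows two planes suffice per "side," hence four total.

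The case $O \notin \operatorname{int} R$: then $w$ is beyond some of the facets of $R$, namely the $m-3$ components of $S=P/_w$ — no wait, beyond exactly $m-3$ facets of the \emph{missing} type; more precisely $O$ lies beyond at least one facet $F_0 \in \mathcal{F}(R)$, and since $P=[R,w]$ this $F_0$ is not a facet of $P$. The facets of $P$ through $w$ split according to whether they "shield" $O$ from $F_0$'s region or not. I would separate $O$ from the facets on the "$F_0$-side" using at most two additional hyperplanes (one of them can be taken near $\langle F_0\rangle$ itself, reusing Lemma A applied to $R$ in spirit), and the remaining facets through $w$ behave as in the interior case and cost four. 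Adding, $4+2 = 6$.

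The main obstacle I anticipate is the combinatorial covering count for stacked $3$-polytopes — showing that two planes, not three or four, suffice to strip off all the visible triangular $2$-faces on one side of a generic line through the interior point. This is a genuine geometric claim about the linking/tree structure of the cuts and components of $S$ (and is exactly the kind of statement the remarks following 2.2 and the separation properties 3.1–3.4 are designed to support); I expect to prove it by induction on the number of components, peeling off one tetrahedral component at a time across a cut and checking that the separating plane for the smaller stacked polytope can be perturbed to also separate $O^*$ from the two new triangular faces created by that cut, using that $O^*$ and those two faces lie on opposite sides of the cut plane. Everything else — the bijection of facets via 2.3, the lifting via 2.1, and the bookkeeping of the $+2$ in the non-interior case — is routine once this counting lemma is in hand.
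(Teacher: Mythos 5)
Your reduction is the same as the paper's first step: pass to the vertex figure $P^{\ast}=P/_w$, which is stacked, identify the facets of $P$ through $w$ with the $2$-faces of $P^{\ast}$, and separate $O^{\ast}=\langle w,O\rangle\cap P^{\ast}$ from those $2$-faces before lifting back via 2.1. But the counting lemma you yourself flag as the main obstacle is exactly where the proposal breaks, and the mechanism you propose for it is false. If $O^{\ast}$ lies in the relative interior of a tetrahedral component $A^{\ast}=[x^{\ast},y^{\ast},z^{\ast},v^{\ast}]$ of $P^{\ast}$, then any plane $H$ with $O^{\ast}$ in one open half-space can have at most three vertices of $A^{\ast}$ in the opposite closed half-space (all four would force $A^{\ast}$, hence $O^{\ast}$, into that closed half-space), so $H$ separates $O^{\ast}$ from at most \emph{one} facet of $A^{\ast}$. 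Consequently no single ``supporting-type plane tilted slightly off a well-chosen face'' can dispose of a whole visible family, and if a generic line puts three facets of $A^{\ast}$ into one family, your ``two planes per side'' budget is already exceeded; no perturbation or induction on components repairs this. The idea you are missing is to localize $O^{\ast}$ in the component $A^{\ast}$ containing it and take precisely the four hyperplanes $\langle w,x,y,z\rangle$, $\langle w,x,y,v\rangle$, $\langle w,x,z,v\rangle$, $\langle w,y,z,v\rangle$: by the tree structure of the cuts, every $2$-face of $P^{\ast}$ lies in the closed far half-space of one of the four facet planes of $A^{\ast}$, and four is already sharp for a single component.

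The non-interior case is also not established by your $4+2$ count. In the paper the bound six arises only in the degenerate situation where $O^{\ast}$ lies on a cut $B^{\ast}=[x^{\ast},y^{\ast},z^{\ast}]$ and moreover $O\in[w,x,y,z]$: then $P=[P',P'']$ with $P'\cap P''=[w,x,y,z]$, each facet through $w$ lies in $P'$ or in $P''$, and LEMMA A applied to $P'$ and to $P''$ (with $O$ on their boundaries) gives $3+3=6$; when $O^{\ast}$ is on a cut but $O\notin[w,x,y,z]$, two hyperplanes already suffice. The hypothesis $O\in\operatorname{int}R$ enters exactly here — it rules out $O\in[w,x,y,z]$ because $[x,y,z,v]\in\mathcal{F}(R)\setminus\mathcal{F}(P)$ — and not, as you suggest, by guaranteeing that $O^{\ast}$ is interior to $S$; that already follows from $O\in\operatorname{int}P$ alone. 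Your ``two additional hyperplanes near $\langle F_0\rangle$'' for the facets on the $F_0$-side is not justified and does not obviously bound anything; you would need to replace it by the Lemma-A-on-the-two-pieces argument (or an equally concrete substitute) before the bound six is proved.
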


\begin{proof}
Since $P^*=P\slash _w$ is stacked and $O\in int\  P$, it follows that $O^*\in \langle w,O\rangle \cap P^*$ is in a component $A^*=[x^*,y^*,z^*,v^*]$ of $P^*$. If $O^*\in relint\  A^*$ then $O$ is separated from $F$ by one of $\langle w,x,y,z \rangle ,\  \langle w,x,y,v \rangle,\  \langle w,x,z,v \rangle$ and $\langle w,y,z,v \rangle$.

Let $O^*\in B^*=[x^*,y^*,z^*]$, say. Then $B^*$ is a cut of $P^*,\  O\in \langle w,x,y,z \rangle$ and there are subpolytopes $P'$ and $P''$ of $P$ such that $P' \cap  P'' =[w,x,y,z],[P',P'']=P$ and (since $w\in F$) either $F\subset P'$ or $F\subset P''$.

We recall from $2.3$ that $[x,y,z,v]\in \mathcal{F}(R)\backslash \mathcal{F}(P)$. If $O\in int \  R$ then it is clear that $O\notin [w,x,y,z]$; that is, $O\notin P'\cup P''$ and $O$ is separated from $F$ by one of two hyperplanes. If $O\in [w,x,y,z]$ then $O\in bd(P')\cap bd(P'')$ and we apply LEMMA A. \qed
\end{proof}
\textbf{REMARKS} Let $Q$ be a subpolytope of $P$ such that $O\notin int\  Q$.

\begin{theorem1}
If $Q\in \mathcal{N}_{m-1}$ then $O$ is strictly separated from any $F\in \mathcal{F}(P)$ by one of at most nine( three from LEMMA A, six from LEMMA B) hyperplanes.
\end{theorem1}

\begin{theorem1}
If $Q\in \mathcal{N}_{m-3}$ then $O$ is strictly separated from any $F\in \mathcal{F}(P)$ by one of at most fifteen hyperplanes.
\end{theorem1}

For $4.2$, we apply $LEMMA\  B$ under the assumption that $O$ is an interior point of any $Q'\in \mathcal{N}_{m-1}$ such that $Q'\subset P$

\section{SIMPLY LINKED P}

Let $n\geq 7$ and $P=P_n\in \mathcal{N}_n$ be linked under the array $x_n>x_{n-1}>\cdots >x_1$.

Let $\mathcal{W}=\lbrace w_s,w_{s-1},\cdots,w_1\rbrace$ be an s element subset of $\mathcal{V}(P)$ with the induced array $ w_s>w_{s-1}>\cdots>w_1$ in the case $s>1$. Then $\mathcal{W}$ is a \textit{chain} if either $s=1$ or 

\begin{eqnarray*}
w_s\rightarrow  w_{s-1} \rightarrow \cdots \rightarrow w_1.
\end{eqnarray*}

For $x_k\in \mathcal{V}(P)$, let $\mathcal{V}^k$ denote the union of all chains of $P$ with $x_k$ as the least vertex.

Finally, we say that $P_n$ is \textit{simply linked} if for $k=7,\cdots ,n:$
\begin{itemize}
\item $\mathcal{V}^k$ is a chain, and
\item for disjoint chains $\mathcal{V}^c$ and $\mathcal{V}^d$, there are $x_i \neq x_j$ in $\mathcal{V}(P_6)$ such that $x_c\rightarrow x_i,x_d\rightarrow x_j$ and $[x_i,x_j]\notin \mathcal{U}(P_6)$.
\end{itemize}

Henceforth, we assume that $P_n$ is simply linked. Then it follows from $2.5$ that $\lbrace x_7,\cdots ,x_n \rbrace$ is the union of at most three pairwise disjoint maximal chains.

\begin{lemma}
Let $6 \leq m<n, x_m<x_t,x_k<x_t$ and $x_t \notin \mathcal{V}^k.$\\ \textbf{C.1} $H\cap [\mathcal{V}^t]=\emptyset$ for any hyperplane $H$ spanned from $\lbrace x_1,\cdots ,x_m \rbrace$.\\ \textbf{C.2} Let $H_h=\langle x_a,x_b,x_c,x_h \rangle$ be a hyperplane with $\lbrace x_a,x_b,x_c\rbrace \subset \lbrace x_1,\cdots ,x_m\rbrace$ and $H_h \cap \mathcal{V}^k=\lbrace x_h \rbrace$. Then $H_h\cap [\mathcal{V}^t]=\emptyset$.\\ \textbf{C.3} Let $x_t\rightarrow x_j,x_j\notin \lbrace x_a,x_b,x_c\rbrace$ and $H_h$ be defined as above. Then $H_h\cap [\mathcal{V}^j]=\emptyset$.
\end{lemma}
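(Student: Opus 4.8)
The plan is to reduce C.1--C.3 to a single remark about chains. A hyperplane $H$ misses a polytope $[w_s,\dots,w_1]$ exactly when none of $w_1,\dots,w_s$ lies on $H$ and they all lie strictly on one side of $H$. If $\{w_s>\dots>w_1\}$ is a chain then $w_{i+1}\rightarrow w_i$, hence $[w_{i+1},w_i]\in\mathcal U(P_{w_{i+1}})$, where $P_{w}$ denotes the member of the array with top vertex $w$; by $2.4$ the vertices $w_{i+1}$ and $w_i$ then lie on a common side of every hyperplane determined by vertices of $P_{w_{i+1}}$. So once the side of $H$ containing $w_1$ is known, the whole chain is dragged onto that side \emph{provided} $H$ is determined by vertices of index at most $\mathrm{ind}(w_{i+1})$ for each $i$; and for the ``no $w_i$ on $H$'' requirement I use that no five vertices of $P$ lie in a common hyperplane, so a hyperplane spanned by four vertices contains no fifth. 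The real content of each part is then the index bookkeeping needed to see that the hyperplane at hand is ``early enough''.

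I would dispose of \textbf{C.1} first. Here $x_m<x_t$ forces every vertex of $\mathcal V^t=\{w_s>\dots>w_1=x_t\}$ to have index $\ge t>m$, while $H$ is spanned by vertices of index $\le m$; so $H$ is determined by vertices of $P_{w_{i+1}}$ for every $i$, no $w_i$ lies on $H$, and the remark applies directly to the chain $\mathcal V^t$.

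For \textbf{C.2} I would begin by checking that $x_h\notin\mathcal V^t$: if $\mathcal V^k$ and $\mathcal V^t$ met, then following the links downward in both chains --- they agree step by step, because a vertex of index $\ge 7$ is linked to one vertex or to two vertices both of index $\le 6$ --- one reaches $x_t\in\mathcal V^k$, against the hypothesis; in particular $x_h\ne x_t$. I then run the remark on $\mathcal V^t$ with $H=H_h$: the links $w_{i+1}\rightarrow w_i$ with $\mathrm{ind}(w_{i+1})\ge\mathrm{ind}(x_h)$ cause no difficulty (the vertices $x_a,x_b,x_c$ have index $\le m<t$ and are harmless). The problem is the bottom sub-chain $w_\ell\rightarrow\dots\rightarrow w_1=x_t$ of vertices of index below that of $x_h$: for these $H_h$ is not determined by vertices of $P_{w_{i+1}}$, $x_h$ being absent. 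Here I would pass, via $2.2$, to an edge-quotient $2$-polytope $P\slash_{[x_a,x_b]}$, where $H_h$ becomes the line through the images of $x_c$ and $x_h$; the nesting forced on $\mathcal V^t$ by its universal edges and $2.4$ puts the images of the $w_i$ consecutively on one side of that line, and the position of $x_h$ is pinned down by $H_h\cap\mathcal V^k=\{x_h\}$ together with the simply linked structure --- via $2.5$, distinct maximal chains link to vertices $x_i\ne x_j$ of $P_6$ with $[x_i,x_j]\notin\mathcal U(P_6)$, which separates the two chains' sectors. This should give $H_h\cap[\mathcal V^t]=\emptyset$.

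For \textbf{C.3} I would use that $x_t\rightarrow x_j$ gives $[x_t,x_j]\in\mathcal U(P_t)$, and that simple linkedness makes $\mathcal V^j$ a chain; tracing links (again with the rule on vertices linked to two predecessors) identifies $\mathcal V^j=\mathcal V^t\cup\{x_j\}$ with $x_j$ its least vertex. Also $x_j\notin\{x_a,x_b,x_c\}$ by hypothesis and $x_j\ne x_h$ (else $x_t\in\mathcal V^k$), so $x_j\notin H_h$. It then suffices to add $x_j$ to the C.2 picture: if $\mathrm{ind}(x_h)\le t$ then $2.4$ in $P_t$ places $x_j$ on the side of $H_h$ occupied by $x_t$, hence by all of $\mathcal V^t$; if $\mathrm{ind}(x_h)>t$ the universal edge $[x_t,x_j]$ puts the image of $x_j$ in the same sector as $\mathcal V^t$ in the edge-quotient of C.2. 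Either way $[\mathcal V^j]=[\mathcal V^t,x_j]$ lies on one side of $H_h$. I expect the main obstacle to be exactly the case $\mathrm{ind}(x_h)>\mathrm{ind}(w_2)$ shared by C.2 and C.3: there the universal edges at the foot of $\mathcal V^t$ are silent about $x_h$ through $2.4$, and obtaining the angular separation in the edge quotient --- showing the sector spanned by $\mathcal V^t$ avoids both the direction of the image of $x_h$ and its opposite --- is where the simply linked hypothesis and $2.5$ must be used in earnest.
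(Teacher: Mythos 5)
Your C.1 coincides with the paper's argument (if $H$ met $[\mathcal{V}^t]$ it would strictly separate some consecutive pair $x_v\rightarrow x_u$ of the chain, and $[x_v,x_u]\in\mathcal{U}(P_v)$ together with $P_m\subset P_t\subset P_v$ contradicts 2.4), and your reduction of C.3 via $\mathcal{V}^j=\{x_j\}\cup\mathcal{V}^t$ to the question of whether $H_h$ separates $x_t$ from $x_j$ is also the paper's. The genuine gap is in C.2, in precisely the case you yourself flag as ``the main obstacle'': when the separated consecutive pair $x_s\rightarrow x_q$ of $\mathcal{V}^t$ has $x_s<x_h$, so that $H_h$ is not spanned by vertices of $P_s$ and 2.4 says nothing directly. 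Your proposed fix --- pass to the quotient $P\slash_{[x_a,x_b]}$ and ``pin down'' the image of $x_h$ using $H_h\cap\mathcal{V}^k=\{x_h\}$, 2.5 and the second bullet of simple linkedness --- is never carried out (``This should give\dots''), and it leans on the wrong hypothesis: the second bullet of simple linkedness concerns the feet of disjoint maximal chains in $P_6$ and is what powers LEMMA D, whereas Lemma C is stated for arbitrary $x_k<x_t$ with $x_t\notin\mathcal{V}^k$ and its proof in the paper uses only the chain structure and 2.4.

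The missing idea is a descent along $\mathcal{V}^k$. Since $x_h\in\mathcal{V}^k$ and $x_k<x_t\le x_s<x_h$, there is $x_g\in\mathcal{V}^k$ with $x_h\rightarrow x_g$, hence $[x_g,x_h]\in\mathcal{U}(P_h)$. The hyperplanes $\langle x_a,x_b,x_c,x_s\rangle$ and $\langle x_a,x_b,x_c,x_q\rangle$ are spanned by vertices of $P_h$ (note $\mathcal{V}^k\cap\{x_a,x_b,x_c\}=\emptyset$ follows from $H_h\cap\mathcal{V}^k=\{x_h\}$), so by 2.4 neither meets $[x_g,x_h]$; within the pencil of hyperplanes through $\langle x_a,x_b,x_c\rangle$ this forces $\langle x_a,x_b,x_c,x_g\rangle$ to strictly separate $x_s$ and $x_q$ as well. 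The C.1 argument then forces $P_s\subset P_g$, and iterating down $x_h\rightarrow x_g\rightarrow\cdots\rightarrow x_k$ yields $x_t\in P_s\subset P_k$, contradicting $x_k<x_t$. Without this replacement-and-descent step (or a worked-out substitute for it), C.2 --- and therefore C.3, which you correctly reduce to the same configuration with $(x_s,x_q)=(x_t,x_j)$ --- remains unproved.
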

\begin{proof}
Since $P$ is simplicial, it follows from $H\cap [\mathcal{V}^t]\neq  \emptyset$ that $H$ strictly separates some $x_v$ and $x_u$ in the chain $\mathcal{V}^t$ such that $x_v \rightarrow x_u$. Then $[x_v,x_u]\in \mathcal{U}(P_v)$ and $P_m\subset P_t\subset P_v$ yield a contradiction by $2.4$.

As above, $H_h\cap [\mathcal{V}^t]\neq \emptyset$ implies that $H_h$ strictly separates some $x_s$ and $x_q$ in $\mathcal{V}^t$ such that $x_s\rightarrow x_q$. Thus, C.1 yields that $x_s<x_h$ and $x_t\in P_s\subset P_h$. From $x_h \in \mathcal{V}^k$ and $x_k<x_t<x_h$, there is an $x_g \in \mathcal{V}^k$ such that $x_h\rightarrow x_g$. Then $[x_g,x_h]\in \mathcal{U}(P_h),x_m<x_t<x_h, \mathcal{V}^k\cap \lbrace x_a,x_b,x_c \rbrace = \emptyset$ and $2.4$ yield that in the pencil of hyperplanes containing $\langle x_a,x_b,x_c\rangle:$
\begin{eqnarray*}
\langle x_a,x_b,x_c,x_s\rangle \cap [x_g,x_h]=\emptyset =\langle x_a,x_b,x_c,x_q\rangle \cap [x_g,x_h].
\end{eqnarray*}
 Hence, $\langle x_a,x_b,x_c,x_g\rangle$also strictly separates $x_s$ and $x_q$, and $x_t\in P_s\subset P_g \subset P_h.$ It now follows from $x_h\rightarrow x_g\rightarrow \cdots \rightarrow x_k$ that $x_t\in P_s\subset P_k\subset P_h;$ a contradiction.

We note that $\mathcal{V}^j=\lbrace x_j\rbrace \cup \mathcal{V}^t$ and 
that if $H_h \cap [\mathcal{V}^j]\neq \emptyset $ then $H_h$ strictly separates $x_t$ and $x_j$ by C.2, and $x_t<x_h$ by C.1. We now argue on above and obtain a contradiction. \qed
\end{proof}
\textbf{REMARKS} We recall that $P_m=[x_m,x_{m-1},\cdots,x_1]$ for $m=n,\cdots,6.$ Let $P_5$ denote any 4-subpolytope of $P_6$. In view of $2.5$,
\begin{theorem1}
there is a labeling of $\mathcal{V}(P_6)$, which we may denote by $x_1,x_2\cdots ,x_6$, such that 
\begin{itemize}
\item $P_6$ satisfies Gale's Evenness Condition with $x_1<x_2<\cdots <x_6, Y=\lbrace x_1,x_3,x_5 \rbrace,Z=\lbrace x_2,x_4,x_6\rbrace$
\item $P_5=[x_1,x_2,\cdots,x_5]$, and
\item any hyperplane through $\langle Y \rangle$ strictly separates two elements of $Z$.
\end{itemize}
\end{theorem1}

We recall that $P=P_n$ is simply linked under $x_n>x_{n-1}>\cdots >x_1$ and $P_m=[x_m,\cdots,x_1]$ for $m\geq 5$. Let $O$ be an interior point of $P,6\leq m\leq n-1$ and $O\in P_m \backslash P_{m-1}$. We note that a vertex of $[x_{m+1},\cdots ,x_n]$ is linked to a vertex of $P_m$.

With $v=x_w>x_m,Q=P_m$ and $R=[Q,v]$, we label $Q$ and $R^*=R\slash _v$\\ as in Section 3 so that $x_w\rightarrow y_1$ (hence, each $Z'_t$ is empty) and $\langle x_w,O\rangle \cap [y_1,y_2,y_t,y_{t+1}]\neq \emptyset$ for some $3\leq t\leq a-1$. We let $T=[y_1,y_2,y_t,y_{t+1}]$,
\begin{eqnarray*}
Z_t^-=\lbrace y_3,\cdots , y_{t-1} \rbrace \cup Z_3 \cup \cdots \cup Z_{t-1},\\Z_t^+=\lbrace y_{t+2},\cdots , y_a\rbrace \cup Z_{t+1} \cup \cdots \cup Z_{a-1}
\end{eqnarray*}
and note that \\
\begin{centering}
$T\in \mathcal{F}(Q)\backslash \mathcal{F}(R)$ and $\mathcal{V}(P_m)=Y_a\cup Z_3\cup \cdots \cup Z_{a-1}=\mathcal{V}(T)\cup Z_t^- \cup Z_t\cup Z_t^+$.
\end{centering}
\begin{figure}[htbp]                                                         
\centering\includegraphics[width=4in]{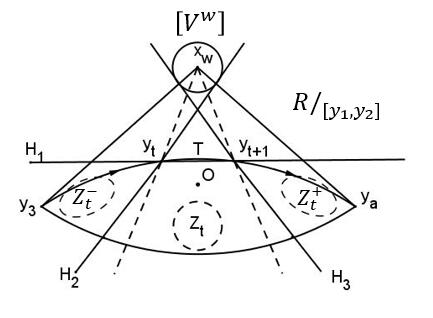}       
\caption{ }\label{fig:4}                                                     
\end{figure}                                                                     

From the Schlegel diagram of $R^*$ on $[y_1,y_2,y_a]$ in Figure 1, we readily obtain diagrams of $R^*$ on 2-faces containing $[y_1,y_t]$ or $[y_2,y_t]$. In Figure 3, 4 and 5, we depict associated polygons $R\slash _{[y_1,y_2]},R\slash _{[y_1,y_t]}$ and $ R\slash _{[y_2,y_t]}$ that include $[\mathcal{V}^w]$(as per 3.2 and C.1) and hyperplanes $H_1,H_2,H_3,H_4$ and $H_5$ that separate $O$ and $[\mathcal{V}^w]$. We note that each of $H_2,H_3,H_4$ and $H_5$ intersects and supports $[\mathcal{V}^w]$. For $i=1,\cdots ,5$, let $H_i^-$ and $H_i^+$ denote the open half-spaces of $\bbbr ^4$ determined by $H_i$ with $\mathcal{V}^w \subset H_i \cup H_i^+$.\\ \\
\textbf{REMARKS} Let $F\in \mathcal{F}(P)$ and assume by 4.2 that $m\leq n-3$. From $\langle x_w,O\rangle\cap T\neq \emptyset$, we have the following:

\begin{theorem1}
O is separated from all F with a common vertex by one of at most four hyperplanes; cf. LEMMA B.
\end{theorem1}
\begin{theorem1}
$O\in [x_m,P_{m-1}]\backslash P_{m-1}$ is separated from any $F\in \mathcal{F}(P_m)$ by one of at most five hyperplanes.
\end{theorem1}
\begin{theorem1}
If $F\cap \mathcal{V}^w \neq \emptyset$ then $F$ intersects at most one of $Z_t^-,Z_t$ and $Z_t^+$; cf. 3.4.
\end{theorem1}
\begin{theorem1}
If $O\notin T$ then $O$ is separated from any $F$ such that $F\cap \mathcal{V}^w\neq \emptyset$ and $\mathcal{V}(F)\subset \mathcal{V}(P_m)\cup \mathcal{V}^w$ by one of $H_2,H_3,H_4$, and $H_1$ or $H_5$ in  the case $F\cap (Z_t^- \cup Z_t \cup Z_t^+)= \emptyset$.
\end{theorem1}
\begin{theorem1}
Let $x_m\notin T$. Then $O$ is separated from any $F$ such that $\mathcal{V}(F)\subset \mathcal{V}(P_{m-1})\cup \mathcal{V}^w$ by one hyperplane.
\end{theorem1}

\begin{figure}[htbp]                                                         
\centering\includegraphics[width=4in]{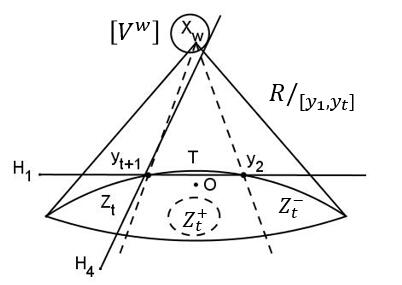}       
\caption{ }\label{fig:5}                                                     
\end{figure}                                                                     

Regarding 5.6; let $\langle x_w,O\rangle \cap T = \lbrace p_1 \rbrace$. Then $\langle x_w,O\rangle \cap bd(P_{m-1})=\lbrace p_1,p_2 \rbrace$ and $\langle x_w,O\rangle \cap bd(P_m)=\lbrace p_1,p_3 \rbrace$ with $p_2 \in [p_1,p_3]$. From $O\in [x_m,P_{m-1}]\backslash P_m$, we obtain that $O\in [p_2,p_3]$. It is now clear that there is an $F'\in \mathcal{F}(P_{m-1})$ such that $p_2\in F'$ and $\langle F' \rangle$ separates $O$ from $F$ with $\mathcal{V}(F)\subset \mathcal{V}(P_{m-1})\cup \mathcal{V}^w$.\qed \  

Since $P$ is simply linked and $m\leq n-3$, we consider the case that $\lbrace x_{m+1},\cdots ,x_n \rbrace$ is the union of mutually disjoint chains $\mathcal{V}^w, \mathcal{V}^s$ and $\mathcal{V}^r$ with $x_w\rightarrow y_1\in P_m,x_s\rightarrow \hat{y}_1 \in P_m$ and $x_r\rightarrow \bar{y}_1=x_m$. Then with labelings analogous to the one for $Q=P_m$ and $R^*[P_m,x_w]\slash _{x_w}$;
\begin{eqnarray*}
\mathcal{V}(P_m)=\hat{Y}_b\cup \hat{Z}_3\cup \cdots \cup \hat{Z}_{b-1}=\lbrace \hat{y}_1,\hat{y}_2,\hat{y}_k,\hat{y}_{k+1}\rbrace \cup \hat{Z}_k^- \cup \hat{Z}_k \cup \hat{Z}_k^+
\end{eqnarray*}
corresponds to $[P_m,x_s]\slash _{x_s}$ and 
\begin{eqnarray*}
 \mathcal{V}(P_m)=\bar{Y}_c\cup \bar{Z}_3\cup \cdots \cup \bar{Z}_{c-1}=\lbrace \bar{y}_1,\bar{y}_2,\bar{y}_i,\bar{y}_{i+1}\rbrace \cup \bar{Z}_i^- \cup \bar{Z}_i \cup \bar{Z}_i^+ 
\end{eqnarray*}
corresponds to $[P_m,x_r]\slash _{x_r}$.

We simplify the notation and let $u_j=\hat{y}_j,U_j=\hat{Z}_j,r_j=\bar{y}_j$ and $V_j=\bar{Z}_j$.

With reference to Figure 3, 4 and 5, we assume that $\lbrace T,L,I\rbrace \subset \mathcal{F}(P_m)$ and that
\begin{itemize}
\item $\langle x_w,O\rangle \cap T\neq \emptyset$ with $T=[y_1,y_2,y_t,y_{t+1}]\notin \mathcal{F}([P_m,x_w])$,
\item $\langle x_s,O\rangle \cap K\neq \emptyset$ with $K=[u_1,u_2,u_k,u_{k+1}]\notin \mathcal{F}([P_m,x_s])$,
\item $\langle x_r,O\rangle \cap I\neq \emptyset$ with $I=[v_1,v_2,v_i,v_{i+1}]\notin \mathcal{F}([P_m,x_r])$,
\item $O$ is separated from $[\mathcal{V}^s]([\mathcal{V}^r])$ by $\hat{H}_1,\cdots ,\hat{H}_5(\bar{H}_1,\cdots,\bar{H}_5)$ and
\item $\mathcal{V}^s \subset \hat{H}_j\cup \hat{H}_j^+(\mathcal{V}^r \subset \bar{H}_j\cup \bar{H}_j^+)$ for $j=1,\cdots , 5$.
\end{itemize}

\begin{figure}[htbp]                                                         
\centering\includegraphics[width=4in]{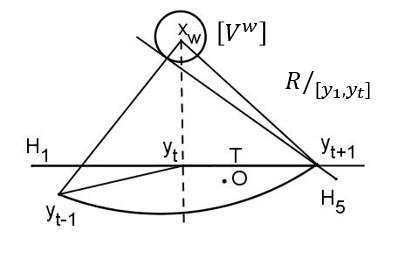}       
\caption{ }\label{fig:1}                                                     
\end{figure}                                                                     
\textbf{REMARK} We refer to Figure 4, and consider any hyperplane $H'$ through $\langle y_1,y_t,y_{t+1}\rangle$ in the case that $x_s \rightarrow u_1=y_2$. Then $[x_s,y_2]\in \mathcal{U}(P_s)$, and it follows from LEMMA C that if $H'\cap Z_t \neq \emptyset$, then $H'\cap [y_2,x_s]=\emptyset,H'\cap [\mathcal{V}^s]=\emptyset$ and $H'$ strictly separates $[\mathcal{V}^s]$ and $[\mathcal{V}^w]$. The following now follows from 3.4:
\begin{theorem1}
If $u_1=y_2$ and $F\cap \mathcal{V}^s\neq \emptyset \neq F\cap \mathcal{V}^w$ then $F\cap Z_t=\emptyset$.
\end{theorem1}

\begin{lemma}
Let $F\in \mathcal{F}(P)$.  Then $F$ intersects at most two of $\mathcal{V}^2,\mathcal{V}^4$ and $\mathcal{V}^6$, and at most two of $\mathcal{V}^w,\mathcal{V}^s$ and $\mathcal{V}^r$.
\end{lemma}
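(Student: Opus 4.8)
The plan is to prove both assertions in parallel, since they have the same shape: in each case we have three chains whose least vertices link into $P_6$, and we must rule out a facet meeting all three. I would first treat the statement about $\mathcal{V}^w,\mathcal{V}^s,\mathcal{V}^r$, which is the concrete instance used in the sequel, and then observe that $\mathcal{V}^2,\mathcal{V}^4,\mathcal{V}^6$ is the degenerate case $n=6$ of the same argument (using 2.5 and 5.1 in place of the linking relations).

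First I would suppose, for contradiction, that some $F\in\mathcal{F}(P)$ satisfies $F\cap\mathcal{V}^w\neq\emptyset$, $F\cap\mathcal{V}^s\neq\emptyset$ and $F\cap\mathcal{V}^r\neq\emptyset$. Since each $\mathcal{V}^\bullet$ is a chain with least vertex $x_w$ (resp. $x_s$, $x_r$) and since, by 2.4, the two endpoints of a universal edge lie on the same side of every hyperplane spanned by vertices of $P$, a supporting hyperplane of a facet $F$ that meets a chain must in fact contain every vertex of that chain that lies below the top vertex of $F\cap\mathcal{V}^\bullet$; in particular $\langle F\rangle$ contains $x_w$, $x_s$ and $x_r$ themselves (each is the least vertex and hence is dragged down to $\langle F\rangle$ by the chain of universal edges $x_w\to y_1$, etc., exactly the argument used in the proof of LEMMA C). So $\{x_w,x_s,x_r\}\subset\mathcal{V}(F)$, and moreover $F$ contains $y_1\in P_m$, $\hat y_1=u_1\in P_m$ and $\bar y_1=x_m\in P_m$: the linking edges $[x_w,y_1]$, $[x_s,u_1]$, $[x_r,x_m]$ are universal in $P_w,P_s,P_r$ respectively, so by 2.4 applied to $\langle F\rangle$ they force $y_1,u_1,x_m\in\langle F\rangle\cap\mathcal{V}(P)=\mathcal{V}(F)$.

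Now I would invoke the simply-linked hypothesis: for the pairwise disjoint chains $\mathcal{V}^w$ and $\mathcal{V}^s$ there are distinct vertices $x_i,x_j\in\mathcal{V}(P_6)$ with $x_w\to x_i$, $x_s\to x_j$ and $[x_i,x_j]\notin\mathcal{U}(P_6)$; similarly for the pairs $(w,r)$ and $(s,r)$. Because $x_w\to x_i$ and $x_w\to y_1$ force $x_i=y_1$ (a chain has a unique linking target below $6$), and likewise $x_j$ is the $P_6$-vertex linked from the $s$-chain, we get three \emph{distinct} vertices of $P_6$, all lying in $\mathcal{V}(F)$, no two of which form a universal edge of $P_6$. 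By 5.1 the universal edges of $P_6$ are exactly the pairs across the bipartition $Y=\{x_1,x_3,x_5\}$, $Z=\{x_2,x_4,x_6\}$; a three-element subset of $\mathcal{V}(P_6)$ with no pair in $\mathcal{U}(P_6)$ must therefore lie entirely in $Y$ or entirely in $Z$, i.e. it equals $Y$ or $Z$. So, up to relabeling, $Y=\{x_1,x_3,x_5\}\subset\mathcal{V}(F)$. But by 5.1 every hyperplane through $\langle Y\rangle$ strictly separates two elements of $Z$, so $\langle x_1,x_3,x_5\rangle$ and any triangle on $Z$-vertices cannot both be faces of $P$ by 3.4 — more to the point, $\langle F\rangle\supset\langle Y\rangle$ strictly separates two vertices of $P$, contradicting that $F$ is a face (the supporting hyperplane of a face separates no two vertices of $P$). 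This contradiction proves $F$ meets at most two of $\mathcal{V}^w,\mathcal{V}^s,\mathcal{V}^r$, and the identical reasoning with 2.5/5.1 gives the claim for $\mathcal{V}^2,\mathcal{V}^4,\mathcal{V}^6$.

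The main obstacle I anticipate is the bookkeeping in the second paragraph: carefully justifying that membership of one vertex of a chain in $\langle F\rangle$ propagates down the whole chain (and in particular down to the linking vertex in $P_6$), using only 2.4 and the definition of $x_t\to x_r$. One must be careful that $x_t\to x_r$ only says $[x_t,x_r]\in\mathcal{U}(P_t)$, so the propagation argument runs in $P_t$, not in $P_n$; but since $P_t\subset P_n$ and a hyperplane spanned by vertices of $P_t$ is spanned by vertices of $P_n$, 2.4 still applies. I would isolate this propagation as the one real lemma-step and then the rest is the short combinatorial argument about three-element subsets of $\mathcal{V}(P_6)$ avoiding $\mathcal{U}(P_6)$.
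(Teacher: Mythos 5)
Your reduction hinges on the claim in your second paragraph that if $\langle F\rangle$ contains one vertex of a chain then the linking relation ``drags'' the lower vertices of the chain, and ultimately a linking vertex in $P_6$, into $\langle F\rangle$. That claim is false, and 2.4 does not support it. What 2.4 forbids is that a hyperplane determined by vertices of $P$ \emph{strictly separates} the two endpoints of a universal edge; if $x_v\in\mathcal{V}(F)$ and $x_v\rightarrow x_u$, the supporting hyperplane $\langle F\rangle$ simply leaves $x_u$ strictly on the interior side and separates nothing, so no contradiction arises and $x_u$ need not lie in $\langle F\rangle$. A cyclic polytope already gives a counterexample: in $C_7$ the edge $[x_2,x_3]$ is universal, yet $[x_1,x_2,x_4,x_5]$ is a facet containing $x_2$ but not $x_3$. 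Consequently you cannot conclude $\{x_w,x_s,x_r\}\subset\mathcal{V}(F)$, nor that $F$ contains three vertices of $P_6$, and the combinatorial endgame about three-element subsets of $\mathcal{V}(P_6)$ avoiding $\mathcal{U}(P_6)$ never gets off the ground. (There is a secondary slip of the same kind: $x_w\rightarrow y_1$ lands in $P_m$, not necessarily in $P_6$, so identifying $y_1$ with the vertex $x_i$ supplied by the simply-linked condition is also unjustified.)

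The paper avoids this entirely by never trying to pull $F$ down into $P_6$. It passes to the three maximal chains $\mathcal{V}^2,\mathcal{V}^4,\mathcal{V}^6$ linked to $x_2,x_4,x_6$, notes from 5.1 that every hyperplane through $\langle x_1,x_3,x_5\rangle$ strictly separates two of $x_2,x_4,x_6$, and uses LEMMA C to upgrade this to: every such hyperplane strictly separates two of the sets $[\mathcal{V}^2],[\mathcal{V}^4],[\mathcal{V}^6]$. Then 3.4, applied with $\{x_a,x_b,x_c\}=\{x_1,x_3,x_5\}$ and $x_e,x_f,x_g$ chosen one from each chain, says directly that no triangle with one vertex in each chain is a face of $P$; since $P$ is simplicial, a facet meeting all three chains would contain such a triangle. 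The assertion for $\mathcal{V}^w,\mathcal{V}^s,\mathcal{V}^r$ then follows because these are pairwise disjoint and each lies inside a distinct one of the three maximal chains. If you want to salvage your write-up, replace the propagation step by this separation-of-hulls argument; the remaining skeleton of your outline (three chains, three pairwise non-universal targets in $P_6$, the bipartition $Y,Z$ from 5.1) is compatible with it.
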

\begin{proof}
The existence of $\mathcal{V}^w,\mathcal{V}^s$ and $\mathcal{V}^r$ imply that $\lbrace x_7,\cdots,x_n\rbrace$ is the union of pairwise disjoint chains $\mathcal{V}^e,\mathcal{V}^f$ and $\mathcal{V}^g$, say. Since $P_6$ is cyclic with $x_1<x_2<\cdots <x_6$, we may assume by 2.5 and 5.1 that $x_e\rightarrow x_2,x_f\rightarrow x_4$ and $x_g\rightarrow x_6$.

From 5.1 and LEMMA C, we obtain that any hyperplane $H$ through $\langle x_1,x_3,x_5 \rangle$ strictly separates two of $\mathcal{V}^2,\mathcal{V}^4$ and $\mathcal{V}^6$. Hence, no face of $P$ intersects each of $\mathcal{V}^2,\mathcal{V}^4$ and $\mathcal{V}^6$ by 3.4. \qed
\end{proof}
\textbf{REMARK} We refer to Figure 3, 4 and 5, and consider a $v\in \mathcal{V}(P)$ with the property that $v\in H_1^-,v\notin H_2^+\cup H_3^+\cup H_4^+ $ and no hyperplane spanned from $v,y_1,y_2,y_t,y_{t+1}$ intersects $[\mathcal{V}^w]$.

Then $\langle x_w,y_1,y_2,y_t\rangle$ separates $v$ and $Z_t^-, \langle x_w,y_1,y_2,y_{t+1}\rangle$ separates $v$ and $Z_t^+$, and $\langle x_w,y_1,y_t,y_{t+1}\rangle$ separates $v$ and $Z_t$. From $[P_m,x_w]\slash _{[x_w,y_1]}$, it now follows that $[x_w,y_1,v]$ is not a 2-face of $[P_m,x_w,v]$ or $[x_w,y_1,y_2,y_t,y_{t+1},v]$. Since the latter polytope is cyclic, we obtain from 5.1 that
\begin{theorem1}
$\langle v,y_2,y_t,y_{t+1}\rangle$ strictly separates $x_w$ and $y_1$.
\end{theorem1}

\begin{lemma}
Let $x_w<x_s,F\in \mathcal{F}(P)$ and $F\cap \mathcal{V}^w \neq \emptyset \neq F\cap \mathcal{V}^s$. Then $O$ is separated from any such F by\\ \textbf{E.1} at most three hyperplanes $(\hat{H}_2,\hat{H}_3,\hat{H}_4)$ in the case $x_w\in \hat{H}_1^-$ and $O\notin bd(K)$, \\ \textbf{E.2} one hyperplane $( H_i,2\leq i\leq 4)$ in the case $u_1\notin T$ and $O\notin bd(T)$,\\ \textbf{E.3} one hyperplane $(H_i,2\leq i \leq 5)$ in the case $x_w\in \hat{H}_i^+,u_1\in T, x_s \in H_1^-$ and $O\notin bd(T)$, and \\ \textbf{E.4} at most two hyperplanes from $H_2,H_3,\hat{H}_2,\hat{H}_3$ in the case $x_w\in \hat
H_1^+,x_s\in H_1^+, O\notin bd(K) \cup bd(T)$ and either $T\neq K$ , or $T=K$ and $F\cap H_1^-\neq \emptyset \neq F\cap \hat{H}_2^-$.
\end{lemma}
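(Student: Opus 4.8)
The plan is to treat the four cases as refinements of the generic situation set up just before the statement, in which $O$ is captured inside $\langle x_w,O\rangle\cap T$ and (simultaneously) inside $\langle x_s,O\rangle\cap K$, and in which the five hyperplanes $H_1,\dots,H_5$ (resp.\ $\hat H_1,\dots,\hat H_5$) separate $O$ from $[\mathcal V^w]$ (resp.\ $[\mathcal V^s]$). For a facet $F$ with $F\cap\mathcal V^w\neq\emptyset\neq F\cap\mathcal V^s$, the idea throughout is: such an $F$ cannot ``spread'' across both decompositions $\mathcal V(P_m)=\mathcal V(T)\cup Z_t^-\cup Z_t\cup Z_t^+$ and $\mathcal V(P_m)=\mathcal V(K)\cup U_k^-\cup U_k\cup U_k^+$, by 5.4 (each $F$ meeting $\mathcal V^w$ hits at most one of $Z_t^-,Z_t,Z_t^+$; likewise for the $U$'s). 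Combined with the observation that $F$ meets $\mathcal V^w$ and $\mathcal V^s$, 5.5 pins $F$ down to the side of three or four of the $H_i$'s, and the residual ambiguity is killed by whichever extra hypothesis (position of $x_w$ relative to $\hat H_1$, position of $u_1$ relative to $T$, etc.) the case provides.

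Concretely: for \textbf{E.1}, I would argue that $x_w\in\hat H_1^-$ forces $[\mathcal V^w]\subset\hat H_1^-\cup\hat H_1$ (here one uses that $\hat H_1$ supports $[\mathcal V^s]$ and the relative position of the chains coming from simple linkedness), so $F$, meeting $\mathcal V^w$, cannot lie in $\hat H_1^+$; since $O\notin\mathrm{bd}(K)$ removes the need for $\hat H_1$ itself, $F$ is separated from $O$ by one of $\hat H_2,\hat H_3,\hat H_4$ exactly as in 5.5 applied to the $K$-side. For \textbf{E.2}, $u_1\notin T$ means $\mathcal V(F)$, which meets $\mathcal V^w$, has its $P_m$-part avoiding one of the three blocks around $T$ (by 5.4 for the $w$-decomposition) and, crucially, $F\cap(Z_t^-\cup Z_t\cup Z_t^+)$ lies strictly on one side dictated by $u_1$'s position; then $O\notin\mathrm{bd}(T)$ lets a single $H_i$, $2\le i\le 4$, do the separation, the point $H'$-remark (5.8) and Remark 5.7 guaranteeing $H_i$ avoids $[\mathcal V^w]$. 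For \textbf{E.3}, the hypotheses $x_w\in\hat H_i^+$ (so $F$, meeting $\mathcal V^w$, is not ruled out on the $\hat H$-side but also gains no constraint there) together with $u_1\in T$ and $x_s\in H_1^-$ put $F$ on the far side of $H_1$, so as in 5.5 one of $H_2,H_3,H_4$ or $H_5$ suffices once $O\notin\mathrm{bd}(T)$. For \textbf{E.4}, with $x_w\in\hat H_1^+$ and $x_s\in H_1^+$ neither chain is excluded by the others' $H_1$, so I would use the two ``deep'' hyperplanes $H_2,H_3$ and $\hat H_2,\hat H_3$ and show that, whether $T\neq K$ or $T=K$ with the stated transversality $F\cap H_1^-\neq\emptyset\neq F\cap\hat H_2^-$, the facet $F$ is caught between two of these four; here 5.4, Lemma D (two of $\mathcal V^w,\mathcal V^s,\mathcal V^r$ at most), and 3.4 are the tools that prevent $F$ from escaping all of $H_2,H_3,\hat H_2,\hat H_3$ at once.

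The main obstacle I expect is \textbf{E.4}, specifically the subcase $T=K$: when the two captured facets coincide the decompositions $Z_t^{\pm},Z_t$ and $U_k^{\pm},U_k$ interact, and one must verify that the auxiliary transversality $F\cap H_1^-\neq\emptyset\neq F\cap\hat H_2^-$ genuinely forces $F$ across one of the four hyperplanes rather than sliding along the common face $T$; this is where I would lean hardest on 3.4 (forbidding certain six-vertex separation patterns), on the $H'$-remark preceding 5.9, and on Figures 3--5 to read off which $H_i$ supports which side of $[\mathcal V^w]\cup[\mathcal V^s]$. The other three cases should reduce fairly mechanically to 5.5 once the relevant chain is shown to sit on the closed ``$+$'' side of the corresponding $H_1$ or $\hat H_1$, using Lemma C (C.1–C.3) to keep the separating hyperplanes off the chains $[\mathcal V^w]$ and $[\mathcal V^s]$.
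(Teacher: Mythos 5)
Your proposal correctly identifies the ambient setup and several of the right tools (5.4, 5.5, C.1, 3.4, 5.7), but in each of the four cases the step that actually makes the stated bound come out is missing. In \textbf{E.1} the danger is not that $F$ might lie in $\hat H_1^+$; it is that a vertex $v\in F\cap\mathcal{V}^w$ (which sits in $\hat H_1^-$, the same side as $O$) might lie outside all of $\hat H_2^+\cup\hat H_3^+\cup\hat H_4^+$ and so obstruct every one of the three candidate hyperplanes. The paper disposes of this by a dichotomy: either some vertex of $\mathcal{V}(F)\cap\hat H_1^-$ lands in some $\hat H_j^+$ (and then $\hat H_j$ separates, via 5.4), or such a $v$ exists, and then $x_w<x_s$ together with C.3 forces $\langle v,u_2,u_k,u_{k+1}\rangle\cap[\mathcal{V}^s\cup\{u_1\}]=\emptyset$, contradicting 5.8. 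Your sketch never confronts this obstruction, and "5.5 applied to the $K$-side" does not apply as stated because $\mathcal{V}(F)\not\subset\mathcal{V}(P_m)\cup\mathcal{V}^s$ here.

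The same pattern recurs in the remaining cases. In \textbf{E.2} the reason a \emph{single} $H_j$ suffices is that $u_1\notin T$ places $u_1$ in one of $Z_t^-,Z_t,Z_t^+$, hence in a definite $H_j^+$, and then $x_w<x_s$ with C.1 drags the entire chain $\mathcal{V}^s$ into that same $H_j^+$; without this propagation you only recover the "at most three" count of 5.5. In \textbf{E.3} the reduction to one hyperplane rests on 5.7 (giving $F\cap Z_t=\emptyset=F\cap U_k$) followed by a 3.4 argument with the pencil through $\langle y_1,y_2,y_{t+1}\rangle$ that also kills $F\cap Z_t^-$; your version would deliver "one of up to four" with no mechanism for pinning down which. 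In \textbf{E.4} you rightly flag the $T=K$ subcase as the hard point, but the paper's actual device --- introducing $Z_t^-(s)=\{z\in Z_t^-\mid\langle y_1,y_2,y_{t+1},z\rangle$ does not separate $x_w$ and $x_s\}$, showing $F'\cap Z_t^-\subset Z_t^-(s)$ via C.1 and 3.4, and then running the conditional "if $H_3$ fails then $H_1$ (hence $\hat H_2$) or $H_2$ or $\hat H_2$ works" --- is absent, so the claim that $F$ "is caught between two of these four" remains unsubstantiated. As it stands the proposal is a plausible roadmap rather than a proof.
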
 
\begin{proof}
We refer to Figure 3, 4 and 5, and the analogous figures with $\mathcal{V}^s,K,U_j=\hat{Z}_j$ and $\hat{H}_j$ for the location of $O$, and note $\mathcal{V}(F)\subset \mathcal{V}(P_m)\cup \mathcal{V}^w\cup \mathcal{V^s}$ by LEMMA D.

\textbf{E.1} Let $x_w\in \hat{H}_1^-$. Then $\mathcal{V}^w\subset \hat{H}_1^-$ by C.1, and either $(\mathcal{V}(F)\cap \hat{H}_1^-) \cap \hat{H}_j^+\neq \emptyset$ for some $j\in \lbrace 2,3,4 \rbrace$ or there is a $v\in \mathcal{V}^w$ such that $v\notin \hat{H}_2^+ \cup \hat{H}_3^+ \cup \hat{H}_4^+$. In case of the former, $O$ is separated from $F$ by $\hat{H}_j$; cf. 5.4. In case of the latter, it follows from $x_w<x_s$ and C.3 that $\langle v,u_2,u_k,u_{k+1}\cap [\mathcal{V}^s \cup \lbrace u_1\rbrace]=\emptyset$; a contradiction of 5.8.

\textbf{E.2} Let $u_1\notin T$. Then $x_s\rightarrow u_1 \in P_m$ yields that $u_1\in Z_t^-\cup Z_t\cup Z_t^+ \subset H_2^+\cup H_3^+ \cup H_4^+$ and $\mathcal{V}^s\subset H_1^-$. Now $x_w<x_s$ and C.1 yield that if $u_1\subset H_j^+$ then $\mathcal{V}^s\subset H_j^+$ and $O$ is separated from $F$ by $H_j$.

\textbf{E.3} Let $x_w\in \hat{H}_1^+$ and $u_1\in T$. Then $u_1\in \lbrace y_2,y_y,y_{t+1} \rbrace, y_1\in \lbrace u_2,u_k,u_{k+1} \rbrace$ and may assume that $u_1=y_2$ and $y_1=u_2$. From 5.7, we obtain that $F\cap Z_t=\emptyset =F\cap U_k$.

Let $x_s\in H_1^-$. Then $\langle T \rangle =H_1\neq \hat{H}_1=\langle K \rangle$ and $T\neq K$; cf. Figure 6 with $x_s\in H_3^+$, say, and $\mathcal{V}^s\subset H_1^-\cap H_3^+$. We consider the hyperplanes through $\langle y_1,y_2,y_{t+1}\rangle$ and obtain from 3.4 that $F\cap Z_t^-=\emptyset$. Hence, $O$ is separated from $F$ by $H_3$ in this case.

\textbf{E.4} Let $x_w\in \hat{H}_1^+,x_s\in H_1^+$ and $O\notin bd(K)\cup bd(T)$. Then $u_1\in T$, we assume that $(y_1,y_2)=(u_2,u_1)$ and note that $F\cap (Z_t\cup U_k)=\emptyset$.

If $T\neq K$ then we may assume also that $\mathcal{V}^s\subset H_1^+\cup H_3^+$ as in Figure 6. If there is an $F'\in \mathcal{F}(P)$ such that $O$ is not separated from $F'$ by $H_3$ then $F'\cap Z_t^+=\emptyset$ by 5.4. From $F'\cap Z_t= \emptyset$, it now follows that $F'\cap (Z_t^-\cup \lbrace y_t\rbrace)\neq \emptyset$.

Let $Z_t^-(s)=\lbrace z\in Z_t^- | \langle y_1,y_2,y_{t+1},z\rangle\  does\  not\  separate\  x_w\  and\  x_s\rbrace$  . We apply C.1 and 3.4, and obtain that $F'\cap Z_t^-\subset Z_t^-(s)$. Thus, either $O$ is separated from $F'$ by $H_1$ (and so $\hat{H}_2$), or there is an $F'$ such that $F' \cap Z_t^-\neq \emptyset$. In the latter case, it is easy to check (cf. Figure 6) that $O$ is separated from any such $F'$ by $H_2$ or $\hat{H}_2$.

Finally, let $T=K$ with $(y_1,y_2,y_t,y_{t+1})=(u_2,u_1,u_k,u_{k+1})$ and, say, $\mathcal{V}^s\subset H_3^+$. We argue now as above that if $O$ is not separated from $F'\in \mathcal{F}(P)$ by $H_3$ or $H_1=\hat{H}_1$ then $O$ is separated from $F'$ by $H_2$ or $\hat{H}_2$. \qed

\end{proof}
\textbf{REMARK} We observe that under the hypotheses of LEMMA E, it follows from LEMMA A that
\begin{theorem1}
If $O\in bd(k)\cup bd(T)$ and $H$ is a separating hyperplane through $O$ then we may replace $H$ by three strictly separating hyperplanes.
\end{theorem1}

\begin{figure}[htbp]                                                         
\centering\includegraphics[width=4in]{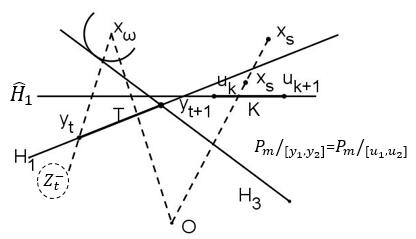}       
\caption{ }\label{fig:1}                                                     
\end{figure}                                                                     

\section{SEPARATION RESULTS}

With \textit{O}$\in$\textit{intP}, let $s(O)$ denote the minimum number of hyperplanes required to strictly separate $O$ from any facet of $P$. We prove that $s(O)\leq 16$ under the assumption that $P=P_n$ is simply linked under the array $x_n>x_{n-1}>\cdots >x_1 ,P_m=[x_m,\cdots,x_2,x_1]$ for $m=n-1,\cdots, 5$ and $\mathcal{V}(P)=\lbrace x_1,x_3,x_5\rbrace \cup \mathcal{V}^2 \cup \mathcal{V}^4 \cup \mathcal{V}^6$.

\begin{theorem1}
We consider first the case of $O\in P_m \backslash P_{m-1}$ for some $6\leq m \leq n$. As noted in Sections 4 and 5, we may assume that $m\leq n-3$ and that $\lbrace x_{m+1},\cdots,x_n\rbrace$ is the union of non-empty chains $\mathcal{V}^r,\mathcal{V}^s$ and $\mathcal{V}^w$ described in Section 5. 
\end{theorem1}
Our arguments are based upon
\begin{itemize}
\item the location of $x_m$ with respect to $T$ and $K$,
\item the order of $x_r$ with respect to $x_w<x_s$, and
\item the location of $O$ with respect to $T,K$ and $I$.
\end{itemize}

For each location of $O$, we present the separation result
\begin{itemize}
\item $\lbrace k\rbrace$: property: rationale
\end{itemize}
to indicate that at most $k$ separating hyperplanes suffice for $F\in \mathcal{F}(P)$ with the indicated property due to the specified reasons. $\lbrace -\rbrace$ indicates that the separating hyperplanes for this case have already counted.
\begin{center}
\textbf{\uppercase\expandafter{\romannumeral1}.} $x_m\notin T\cup K$
\end{center}

Then $T\cup K\subset P_{m-1}$ and $O\notin T\cup K$. From $x_m\in H_1^-\cap \hat{H}_1^-,x_r\rightarrow x_m$ and 2.4,we have that $x_r\in H_1^-\cap \hat{H}_1^-$. Next, LEMMA D and its proof yield that any $F$ intersects at most two of $\mathcal{V}^w,\mathcal{V}^r$ and $\mathcal{V}^s$, and that $[v_1,u_1,y_1]$ is not a 2-face of $P_m$. Hence , $x_m\in I$ implies that $\lbrace u_1,y_1\rbrace \not\subset I$.\\ \\
\textbf{\uppercase\expandafter{\romannumeral1}.1} $O\notin bd(I)$\\ \  

We apply our Lemmas and Remarks. Then
\begin{itemize}
\item $\lbrace 4\rbrace :x_m\in F: 5.2$
\item $\lbrace 1\rbrace :\mathcal{V}(F)\subset \mathcal{V}(P_{m-1})\cup \mathcal{V}^w: 5.6$
\item $\lbrace 1\rbrace :\mathcal{V}(F)\subset \mathcal{V}(P_{m-1})\cup \mathcal{V}^s: 5.6$
\item $\lbrace 3\rbrace : F\cap \mathcal{V}^w\neq \emptyset \neq F\cap \mathcal{V}^s:E.1$, and 
\item $\lbrace 4\rbrace :\mathcal{V}(F)\subset \mathcal{V}(P_{m})\cup \mathcal{V}^r,F\cap \mathcal{V}^r\neq \emptyset: 5.5$ with $\bar{H}_2,\bar{H}_3,\bar{H}_4,\bar{H}_5$(as $v_1=x_m\notin F$).
\end{itemize}

It remains to consider $F$ that inersect $\mathcal{V}^r$ and $\mathcal{V}^w\cup \mathcal{V}^s$. Here, we apply $\lbrace u_1,y_1\rbrace \not\subset I$ and LEMMA E with relabeling as necessary

\begin{center}
\textbf{\uppercase\expandafter{\romannumeral1}. 1.1} $x_r<x_w<x_s$
\end{center}

As $x_w$ and $x_s$ are interchangeable with respect to $x_r$, we assume that $u_1\notin I$,\\say. Then
\begin{itemize}
\item $\lbrace - \rbrace: F\cap \mathcal{V}^r \neq \emptyset \neq F\cap \mathcal{V}^s: x_r<x_s,u_1\notin I$ and E.2 with $\bar{H}_2,\bar{H}_3$ and $\bar{H}_4$ already counted, and
\item $\lbrace 3 \rbrace: F\cap \mathcal{V}^r\neq \emptyset \neq F\cap \mathcal{V}^w: x_r<x_w,x_r\in H_1^-$ and E.1.
\end{itemize}

\begin{center}
\textbf{\uppercase\expandafter{\romannumeral1}. 1.2} $x_w<x_r<x_s$
\end{center}

If $u_1\notin I$ then one case is above, and 
\begin{itemize}
\item $\lbrace 1 \rbrace: F\cap \mathcal{V}^r \neq \emptyset \neq F\cap \mathcal{V}^w: x_w<x_r,v_1=x_m\notin T$ and E.2 
\end{itemize}

If $u_1\in I$ and $y_1\notin I$ then
\begin{itemize}
\item $\lbrace - \rbrace: F\cap \mathcal{V}^r \neq \emptyset \neq F\cap \mathcal{V}^w: x_w<x_r,x_w\in \bar{H}_1^-$ and E.1, and 
\item $\lbrace 3 \rbrace: F\cap \mathcal{V}^r \neq \emptyset \neq F\cap \mathcal{V}^s: x_r<x_s,x_r\in \hat{H}_1^-$ and E.1 .
\end{itemize}
\begin{center}
\textbf{\uppercase\expandafter{\romannumeral1}. 1.3} $x_w<x_s<x_r$
\end{center}

Then $v_1=x_m\notin T\cup K$ and E.2 yield $\lbrace 1\rbrace$ for $F\cap \mathcal{V}^w \neq \emptyset \neq F\cap \mathcal{V}^r$, and $\lbrace 1 \rbrace$ for $F\cap \mathcal{V}^s \neq \emptyset \neq F\cap \mathcal{V}^r$.\\ \\
\textbf{\uppercase\expandafter{\romannumeral1}.2} $O\in bd(I)$\\ \  

We recall that $\langle x_w,O\rangle \cap T\neq \emptyset$ and $O\notin T$. Hence, $H_1=\langle T\rangle$ strictly separates $O$ and $x_w$, and $x_w$ is necessarily beneath any facet of $P_m$ that contains $O$. Thus $x_w\in \bar{H}_1^-$ and, similarly,$x_s\in \bar{H}_1^-$; whence $\mathcal{V}^w\cup \mathcal{V}^s\subset \bar{H}_1^-$.Since $v_1=x_m$ implies that $\bar{H}_1\cap \bar{H}_5=[v_2,v_i,v_{i+1}]\subset bd(P_{m-1})$, it follows from $O\in \bar{H}_1\backslash P_{m-1}$ that $O\notin [v_2,v_i,v_{i+1}]$. From these observations, we have that 
\begin{itemize}
\item $\lbrace 3 \rbrace :F\cap \mathcal{V}^r = \emptyset:(LEMMA) A$.
\item $\lbrace 8 \rbrace: \mathcal{V}(F)\subset \mathcal{V}(P_m) \cup \mathcal{V}^r,F\cap \mathcal{V}^r\neq \emptyset$ :5.5, A and 5.9 with $\bar{H}_1,\bar{H}_2,\bar{H}_3,\bar{H}_4$ as separating hyperplanes and $O\in \bar{H}_1\cap \bar{H}_j$ for some $j\in \lbrace 2,3,4\rbrace$. We apply LEMMA A and replace $\bar{H}_1$ and $\bar{H}_j$ as per 5.9. We indicate these eight hyperplanes by $2\bar{H}_i+3+3$.
\end{itemize}

We now argue as in \uppercase\expandafter{\romannumeral1}.1.1, \uppercase\expandafter{\romannumeral1}.1.2 and \uppercase\expandafter{\romannumeral1}.3 with 5.9 applied for $\bar{H}_1$ and $\bar{H}_j$, and obtain the same counts. Thus, $s(O)\leq 16$ in each of these cases.

\begin{center}
\textbf{\uppercase\expandafter{\romannumeral2}.} $x_m\in K$ and $x_m\notin T$.
\end{center}

Then $T\subset P_{m-1},O\notin T,x_r\in H_1^-$ and we let $x_m=u_2$. We have again that $\lbrace y_1,u_1\rbrace \not\subset I$; and from $\lbrace v_1,u_1\rbrace \subset K$, it follows that $y_1\notin K$ and $x_w\in \hat{H}_1^-$. We note that $x_m=u_2$ yields that $\hat{H}_4$ separates $O$ from any $F$ with $x_m\notin F$ and $\mathcal{V}(F)\subset \mathcal{V}^s \cup \lbrace u_1,u_2,u_k,u_{k+1} \rbrace$.\\ \\ 
\textbf{\uppercase\expandafter{\romannumeral2}.1} $O\notin bd(I)\cup bd(K)$\\ \ 

Similarly to \uppercase\expandafter{\romannumeral1}.1, we obtain 
\begin{itemize}
\item $\lbrace 4 \rbrace:x_m\in F:5.2$,
\item  $\lbrace 1 \rbrace:\mathcal{V}(F)\subset \mathcal{V}(P_{m-1})\cup \mathcal{V}^s:5.6$,
\item $\lbrace 3 \rbrace: \mathcal{V}(F)\subset \mathcal{V}(P_{m})\cup \mathcal{V}^s,F\cap \mathcal{V}^s\neq \emptyset:5.5$ with $\hat{H}_2,\hat{H}_3,\hat{H}_4$,
\item $\lbrace 4 \rbrace:\mathcal{V}(F)\subset \mathcal{V}(P_{m})\cup \mathcal{V}^r,F\cap \mathcal{V}^r\neq \emptyset:5.5$ with $\bar{H}_2,\bar{H}_3,\bar{H}_4,\bar{H}_5$ and 
\item $\lbrace - \rbrace: F\cap \mathcal{V}^w \neq \emptyset \neq F\cap \mathcal{V}^s: x_w<x_s,x_w\in \hat{H}_1^-$, and E.1. 
\end{itemize}

\begin{center}
\textbf{\uppercase\expandafter{\romannumeral2}.1.1} $x_r<x_w<x_s$
\end{center}

If $u_1\notin I$ then we recall that $x_r\in H_1^-$ and argue as in \uppercase\expandafter{\romannumeral1}.1. If $y_1\notin I$ then
\begin{itemize}
\item $\lbrace - \rbrace: F\cap \mathcal{V}^r \neq \emptyset \neq F\cap \mathcal{V}^w: x_r<x_w$ and E.2.
\end{itemize}
For $F\cap \mathcal{V}^r\neq \emptyset \neq F\cap \mathcal{V}^s$; we obtain from $x_r<x_s$ and $u_1\in I$ that one of E.1, E.3 or E.4 is applicable. We note that E.1 and E.3 yield $\lbrace - \rbrace$, and E.4 yields either $\lbrace - \rbrace$ or $\lbrace 3 \rbrace$ with $O\in I=K$ and 5.9 applied to $\hat{H}_1=\bar{H}_1$.

Henceforth, as a simplification, we list only \lq\lq  worst case scenario\rq\rq results. In that regard, it is noteworthy that the assertion of E.4 is the same if $x_s$ and $x_w$ are interchanged.\\
\begin{center}
\textbf{\uppercase\expandafter{\romannumeral2}.1.2} $x_w<x_r<x_s$ or $x_w<x_s<x_r$
\end{center}

Then as worst case scenarios, we have
\begin{itemize}
\item $\lbrace 1 \rbrace: F\cap \mathcal{V}^w\neq \emptyset \neq F\cap \mathcal{V}^r:x_m=v_1\notin T$ and E.2, and
\item $\lbrace 3 \rbrace: F\cap \mathcal{V}^s\neq \emptyset \neq F\cap \mathcal{V}^r:$ E.4, 5.9 with $O\in \hat{H}_1=\bar{H}_1$.
\end{itemize}
\textbf{\uppercase\expandafter{\romannumeral2}.2} $O\in bd(I)$\\ \  

We note that as in \uppercase\expandafter{\romannumeral1}.2; $x_w\in \bar{H}_1^-$ and
\begin{itemize}
\item $\lbrace 8 \rbrace: \mathcal{V}(F)\subset\mathcal{V}(P_m)\cap \mathcal{V}^r,F\cap\mathcal{V}^r\neq \emptyset: 2\bar{H}_i+3+3$ with $O\in\bar{H}_1\cap\bar{H}_j$ for some $j\in \lbrace 2,3,4\rbrace$.
\end{itemize}

If $x_s\in\bar{H}_1^-$ then $\mathcal{V}^w\cup\mathcal{V}^s\subset\bar{H}_1^-$,
\begin{itemize}
\item $\lbrace 3 \rbrace: F\cap \mathcal{V}^r= \emptyset$: LEMMA A
\end{itemize}
and, as worst case scenario, E.2 and $\lbrace y_1,u_1\rbrace\not\subset I$ yield $x_r<x_w<x_s$ and $u_1\in I $. Then
\begin{itemize}
\item $\lbrace - \rbrace: F\cap \mathcal{V}^r\neq \emptyset \neq F\cap \mathcal{V}^w:y_1\notin I$ and E.2, and
\item $\lbrace 5 \rbrace: F\cap \mathcal{V}^r\neq \emptyset \neq F\cap \mathcal{V}^s:x_r\in \hat{H}_1^-$, E.1, 5.9 with $O\in\hat{H}_i$ for some $i\in \lbrace 2,3,4\rbrace$
\end{itemize}

Let $x_s\in\bar{H}_1^+$. Then $u_1\in I$ with $u_1=v_2$, say, and $\langle x_s,O\rangle\cap\ bd(P_m)\subset I.$ Hence, we choose $K=I$ with $u_2=v_1,u_k=v_i$ and $u_{k+1}=v_{i+1}$. Then $\hat{H}_1=\bar{H}_1$ and
\begin{itemize}
\item $\lbrace 5 \rbrace:\mathcal{V}(F)\subset\mathcal{V}(P_m)\cup\mathcal{V}^s,F\cup\mathcal{V}^s\neq\emptyset:2\hat{H}_i+3$ with $O\in\bar{H}_1\cap\hat{H}_j$ and $\lbrace\hat{H}_j,\hat{H}_i,\hat{H}_i\rbrace=\lbrace \hat{H}_2,\hat{H}_3,\hat{H}_4\rbrace$.
\end{itemize}

From $\lbrace y_1,u_1\rbrace \not\subset I$, we obtain that $y_1\notin I$ and $x_w\in \bar{H}_1^-=\hat{H}_1^-$ and
\begin{itemize}
\item $\lbrace 3\rbrace:\mathcal{V}(F)\subset \mathcal{V}(P_m)\cup \mathcal{V}^w$: LEMMA A,
\item $\lbrace -\rbrace:F\cap \mathcal{V}^w\neq \emptyset \neq F\cap \mathcal{V}^s:x_w<x_s,x_w\in \hat{H}_1^-$ and E.1,
\item $\lbrace -\rbrace:F\cap \mathcal{V}^w\neq \emptyset \neq F\cap \mathcal{V}^r$ : either $x_w<x_r$ and E.1, or $x_r<x_w,y_1\notin I$ and E.2, and
\item $\lbrace -\rbrace:F\cap \mathcal{V}^r\neq \emptyset \neq F\cap \mathcal{V}^s$: E.4 with $\bar{H}_1=\hat{H}_1,\bar{H}_2,\bar{H}_3,\hat{H}_2,\hat{H}_3$.
\end{itemize}
\textbf{\uppercase\expandafter{\romannumeral2}.3} $O\in bd(K)$ and $O\notin bd(I)$\\ \  

We recall that $x_w\in \hat{H}_1^-$, and note that $I\neq K$ implies that $x_r\in \hat{H}_1^-$. Then
\begin{itemize}
\item $\lbrace 3\rbrace:F\cap \mathcal{V}^s=\emptyset$: LEMMA A,
\item $\lbrace 8\rbrace: \mathcal{V}(F)\subset \mathcal{V}(P_m)\cup \mathcal{V}^s,F\cap \mathcal{V}^s\neq \emptyset: 2\hat{H}_i+3+3$ with $O\in \hat{H}_1 \cap \hat{H}_j$ for some $j=\lbrace 2,3,4\rbrace$,
\item $\lbrace -\rbrace:F\cap \mathcal{V}^w\neq \emptyset \neq F\cap \mathcal{V}^s:x_w<x_s,x_w\in \hat{H}_1^-$ and E.1, and as worst case scenario,
\item  $\lbrace 3\rbrace:F\cap \mathcal{V}^s\neq \emptyset \neq F\cap \mathcal{V}^r:x_s<x_r,x_s\in \bar{H}_1^-$ and E.1.
\end{itemize}
\begin{center}
\textbf{\uppercase\expandafter{\romannumeral3}.} $x_m\in T$ and $x_m\notin K$.
\end{center}

As $\lbrace y_1,v_1=x_m\rbrace\subset$\textit{T} and \textit{K}$\subset $\textit{P$_{m-1}$}, we have that $u_1\in H_1^-,\mathcal{V}^s\subset H_1^-,\mathcal{V}^r\subset \hat{H}_1^-$ and $O\notin K$. We let $v_1=y_2$ and note that $H_4$ separates $O$ from any $F$ with $x_m \notin F\subset [\mathcal{V}^w\cup \lbrace y_1,y_t,y_{t+1} \rbrace ]$.\\ \\
\textbf{\uppercase\expandafter{\romannumeral3}.1.} $O\notin bd(I) \cup bd(T)$\\  \ 

As in \uppercase\expandafter{\romannumeral2}.1, we obtain that
\begin{itemize}
\item $\lbrace 4\rbrace :x_m\in F$: 5.2,
\item $\lbrace 1 \rbrace :\mathcal{V}(F)\subset \mathcal{V}(P_{m-1})\cup \mathcal{V}^s$: 5.6,
\item $\lbrace 3\rbrace :\mathcal{V}(F)\subset \mathcal{V}(P_m)\cup \mathcal{V}^w,F\cap \mathcal{V}^w\neq \emptyset$: 5.5 with $H_2,H_3,H_4$,
\item $\lbrace 4\rbrace :\mathcal{V}(F)\subset \mathcal{V}(P_m)\cup \mathcal{V}^r,F\cap \mathcal{V}^r\neq \emptyset$: 5.5 with $\bar{H}_2,\bar{H}_3,\bar{H}_4,\bar{H}_5$, and 
\item $\lbrace -\rbrace :F\cap \mathcal{V}^w\neq \emptyset \neq F\cap \mathcal{V}^s :x_w<x_s,u_1\neq T$ and E.2.
\end{itemize}

We note that our repetitive arguments are dependent upon Lemmas A and E, and $\lbrace u_1,y_1\rbrace \not\subset I$. Also that we present only worst case scenarios.

If $x_r<x_s$ then with $u_1\in I$ and $y_1\notin I$, we have 
\begin{itemize}
\item $\lbrace 3\rbrace :F\cap \mathcal{V}^r\neq \emptyset \neq F \cap \mathcal{V}^s: x_r\in \hat{H}_1^-$ with E.1, and 
\item $\lbrace -\rbrace :F\cap \mathcal{V}^r\neq \emptyset \neq F \cap \mathcal{V}^w$: either $x_r<x_w$ with E.2, or $x_w<x_r$ with E.1.
\end{itemize}

Let $x_w<x_s<x_r$. By E.1, we may assume that $\lbrace x_w,x_s\rbrace \not\subset \bar{H}_1^-$. With $x_s\in \bar
{H}_1^+$ and $x_r\in \hat{H}_1$, we have $u_1\in I, y_1\notin I, x_w\in \bar{H}_1^-$,
\begin{itemize}
\item $\lbrace -\rbrace :F\cap \mathcal{V}^w\neq \emptyset \neq F \cap \mathcal{V}^r$: E.1, and
\item $\lbrace 1\rbrace :F\cap \mathcal{V}^s\neq \emptyset \neq F \cap \mathcal{V}^r$ :E.2 or E.3.\\With $x_w\in \bar{H}_1^+$, we have $x_s\in \bar{H}_1^-$,
\item $\lbrace -\rbrace :F\cap \mathcal{V}^s\neq \emptyset \neq F \cap \mathcal{V}^r$ : E.1, and
\item $\lbrace 3\rbrace :F\cap \mathcal{V}^w\neq \emptyset \neq F \cap \mathcal{V}^r$ : E.4 and 5.9 with $O\in T=I$.
\end{itemize}
\textbf{\uppercase\expandafter{\romannumeral3}.2} $O\in bd(I)$.\\ \  

We note as in \uppercase\expandafter{\romannumeral1}.2 that $x_s\in \bar{H}_1^-$ follows from $O\notin K$. Next, we obtain the same separating hyperplanes for $F$ with $F\cap \mathcal{V}^r\neq \emptyset$ and $\mathcal{V}(F)\subset \mathcal{V}(P_m)\cup \mathcal{V}^r$ as in \uppercase\expandafter{\romannumeral2}.2, and with $x_w$ and $x_s$ interchanged, the corresponding worst case scenario for $x_w\in \bar{H}_1^-$.

Let $x_w\in \bar{H}_1^+$. Then we choose $T=I$ and, similarly to \uppercase\expandafter{\romannumeral2}.2, obtain that 
\begin{itemize}
\item $\lbrace 5\rbrace: \mathcal{V}(P) \subset \mathcal{V}(P_m) \cup \mathcal{V}^w,F\cap \mathcal{V}^w\neq \emptyset: 2H_i+3$ with $H_1=\bar{H_1}$, and $O\in H_1\cap H_j$ for some $j\in \lbrace 2,3,4\rbrace$,
\item $\lbrace 3\rbrace: \mathcal{V}(P) \subset \mathcal{V}(P_m) \cup \mathcal{V}^s$: LEMMA A,
\item $\lbrace -\rbrace :F\cap \mathcal{V}^w\neq \emptyset \neq F \cap \mathcal{V}^s:x_w<x_s,u_1\notin T$ and E.2,
\item $\lbrace -\rbrace :F\cap \mathcal{V}^r\neq \emptyset \neq F \cap \mathcal{V}^s$: either $x_r<x_s,u_1\notin I$ and E.2, or $x_s<x_r,x_s\in \bar{H}_1^-$ and E.1, and
\item $\lbrace -\rbrace :F\cap \mathcal{V}^r\neq \emptyset \neq F \cap \mathcal{V}^s$: E.4.
\end{itemize}
\textbf{\uppercase\expandafter{\romannumeral3}.3} $O\in bd(T)$ and $O\notin bd(I)$.\\ \ 

Then $I\neq T$ and $x_r\in H_1^-$. We recall that $y_1\notin T$and $x_s\in H_1^-$. Hence,
\begin{itemize}
\item $\lbrace 8\rbrace: \mathcal{V}(F)\subset\mathcal{V}(P_m)\cup\mathcal{V}^w,F\cap\mathcal{V}^w\neq \emptyset:2H_i+3+3$ with $O\in H_1\cap H_j$ and $\lbrace H_j,H_i,H_i\rbrace=\lbrace H_2,H_3,H_4\rbrace$.
\item $\lbrace 3\rbrace: F\cap \mathcal{V}^w=\emptyset$: LEMMA A,
\item $\lbrace -\rbrace :F\cap \mathcal{V}^w\neq \emptyset \neq F \cap \mathcal{V}^s:x_w<x_s,u_1\notin T$ and E.2, and as worst case scenario,
\item $\lbrace 3\rbrace :F\cap \mathcal{V}^w\neq \emptyset \neq F \cap \mathcal{V}^r:x_w<x_r,x_w\in \bar{H}_1^-$ and E.1.
\end{itemize}

\begin{center}
\textbf{\uppercase\expandafter{\romannumeral4}} $x_m\in T\cap K$.
\end{center}

We let $v_1=x_m=y_2=u_2$, and note that $\lbrace v_1,y_1\rbrace\subset T$ implies that $u_1\notin T$ and $x_s\in H_1^-$; and $\lbrace v_1,u_1\rbrace \subset K$ implies that $y_1\notin K$ and $x_w\in \hat{H}_1^-$.\\ \\
\textbf{\uppercase\expandafter{\romannumeral4}.1} $O\notin bd(K)\cup bd(T) \cup bd(I)$.\\ \ 

We recall that $O\in P_m \backslash P_{m-1}$. Then 
\begin{itemize}
\item $\lbrace 5\rbrace: x_m\in F$ or $F\subset P_m$: 5.3,
\item $\lbrace 3\rbrace: \mathcal{V}(F)\subset \mathcal{V}(P_m)\cup \mathcal{V}^w\neq \emptyset,y_2=x_m\notin F$: 5.5 with $H_2,H_3$, and $H_4$ for $F\cap Z_t\neq \emptyset$ or $F\cap (Z_t^-\cup Z_t\cup Z_t^+)=\emptyset$,
\item $\lbrace 3\rbrace: \mathcal{V}(F)\subset \mathcal{V}(P_m) \cup \mathcal{V}^s, F\cap \mathcal{V}^s\neq \emptyset, u_2=x_m\notin F$: 5.5 with $\hat{H}_2,\hat{H}_3,\hat{H}_4$,
\item $\lbrace 4\rbrace:\mathcal{V}(F)\subset \mathcal{V}(P_m) \cup \mathcal{V}^r, F\cap \mathcal{V}^r\neq \emptyset $: 5.5 with $\bar{H}_2,\bar{H}_3,\bar{H}_4,\bar{H}_5$, and
\item $\lbrace -\rbrace :F\cap \mathcal{V}^w\neq \emptyset \neq F \cap \mathcal{V}^s:x_w<x_s,x_w\in \hat{H}_1^-$ and E.1.
\end{itemize}

We observe that for $x_r$ and $x_s:$ E.1 and E.2 yield $\lbrace -\rbrace$ for $F\cap \mathcal{V}^r\neq \emptyset \neq F\cap \mathcal{V}^s$, and E.3 and E.4 yield $u_1\in I,v_1\in K$ and the worst case scenario
\begin{itemize}
\item  $\lbrace 1\rbrace :F\cap \mathcal{V}^r\neq \emptyset \neq F \cap \mathcal{V}^s$ : either $x_s<x_r$ with $\hat{H}_5$, or $x_r<x_s$ with $\bar{H}_1=\hat{H}_1$.
\end{itemize}

The corresponding observation for $x_r$ and $x_w$, and $\lbrace u_1,y_1\rbrace \not\subset I$, now yield

\begin{itemize}
\item  $\lbrace 1\rbrace :F\cap \mathcal{V}^r\neq \emptyset \neq F \cap (\mathcal{V}^w\cup \mathcal{V}^s)$. 
\end{itemize}
\textbf{\uppercase\expandafter{\romannumeral4}.2} $O\in bd(K)$.\\ \  

Then $O\notin P_{m-1}$ and $u_2=x_m$ imply that $O\notin [u_1,u_k,u_{k+1}]$ and 
\begin{itemize}
\item $\lbrace 8\rbrace: \mathcal{V}(F) \subset \mathcal{V}(P_m)\cup \mathcal{V}^s, F\cap \mathcal{V}^s\neq \emptyset: 2\hat{H}_i+3+3$ with $O\in \hat{H}_1\cap \hat{H}_j$ and $\lbrace \hat{H}_j,\hat{H}_i,\hat{H}_i\rbrace =\lbrace  \hat{H}_2, \hat{H}_3, \hat{H}_4\rbrace$
\end{itemize}

We recall that $x_w\in \hat{H}_1^-$. If $x_r\in \hat{H}_1^-$ then
\begin{itemize}
\item $\lbrace 3\rbrace:F\cap \mathcal{V}^s=\emptyset$: LEMMA A,
\item $\lbrace -\rbrace:F\cap \mathcal{V}^w\neq \emptyset \neq F\cap \mathcal{V}^s:x_w<x_s$ and E.1, and
\item $\lbrace 3\rbrace:F\cap \mathcal{V}^r\neq \emptyset \neq F\cap \mathcal{V}^s:x_s<x_r$ and either $x_s\in \bar{H}_1^-$ and E.1, or $x_s\in \bar{H}_1^+,v_1\in K,x_r\in \hat{H}_1^-$ and E.3, 5.9 with $O\in \hat{H}_5$.
\end{itemize}

Let $x_r\in \hat{H}_1^+$. Then we choose $I=K$ with $(v_1,v_2,v_i,v_{i+1})=(u_2,u_1,u_k,u_{k+1})$, and note that $y_1\in I = K$ and $x_w \in \bar{H}_1^-=\hat{H}_1^-$. Now 
\begin{itemize}
\item $\lbrace 3\rbrace:\mathcal{V}(F)\subset \mathcal{V}(P_m)\cup \mathcal{V}^w$: LEMMA A with $\hat{H}_1^-$,
\item $\lbrace 5\rbrace:\mathcal{V}(F)\subset \mathcal{V}(P_m)\cup \mathcal{V}^r,F\cap \mathcal{V}^r\neq \emptyset: 2\bar{H}_i+3$ with $O\in \hat{H}_1\cap \bar{H}_j$ and $\lbrace \bar{H}_j,\bar{H}_i,\bar{H}_i \rbrace =\lbrace \bar{H}_2,\bar{H}_3,\bar{H}_4\rbrace$,
\item $\lbrace -\rbrace:F\cap \mathcal{V}^w\neq \emptyset \neq F\cap \mathcal{V}^s:x_w<x_s$ and E.1,
\item $\lbrace -\rbrace:F\cap \mathcal{V}^w\neq \emptyset \neq F\cap \mathcal{V}^r$: either $x_w<x_r$ and E.1, or $x_r<x_w$ and E.2, and
\item $\lbrace -\rbrace:F\cap \mathcal{V}^s\neq \emptyset \neq F\cap \mathcal{V}^r$: E.4.
\end{itemize}
\textbf{\uppercase\expandafter{\romannumeral4}.3} $O\in bd(T)$\\ \  

We argue as in \uppercase\expandafter{\romannumeral4}.2 with
\begin{itemize}
\item $\lbrace 8\rbrace: \mathcal{V}(F)\subset \mathcal{V}(P_m)\cup\mathcal{V}^w,F\cap\mathcal{V}^w\neq\emptyset:2H_i+3+3$,
\item $\lbrace -\rbrace:F\cap \mathcal{V}^w\neq \emptyset \neq F\cap \mathcal{V}^s:x_w<x_s$ and E.2, and the cases $x_r\in H_1^-$ and $x_r\in H_1^+$.
\end{itemize}
\textbf{\uppercase\expandafter{\romannumeral4}.4} $O\in bd(I)$ and $O\notin bd(K)\cup bd(T)$\\ \  

Since we choose $K=I(T=I)$ if $x_s\in\bar{H}_1^+(x_w\in\bar{H}_1^+)$, we may assume that $\lbrace x_w,x_s\rbrace\subset \bar{H}_1^-$. Then
\begin{itemize}
\item $\lbrace 8\rbrace: \mathcal{V}(F)\subset \mathcal{V}(P_m)\cup \mathcal{V}^r, F\cap \mathcal{V}^r\neq \emptyset:2\bar{H}_1+3+3$ with $O\in\bar{H}_1\cap\bar{H}_j$ and $\lbrace \bar{H}_j,\bar{H}_i,\bar{H}_i\rbrace=\lbrace \bar{H}_2,\bar{H}v,\bar{H}_4\rbrace$, and
\item $\lbrace 3\rbrace:F\cap\mathcal{V}^r=\emptyset$: LEMMA A\\ If $u_1\notin I$ ,then
\item $\lbrace -\rbrace:F\cap \mathcal{V}^s\neq \emptyset \neq F\cap \mathcal{V}^r$: either $x_r<x_s$ and E.2, or $x_s<x_r$ and E.1, and
\item $\lbrace 3\rbrace:F\cap \mathcal{V}^w\neq \emptyset \neq F\cap \mathcal{V}^r:x_r<x_w$ and E.1.
\end{itemize}

Let $u_1\in I$. Then $y_1\notin I$ and we argue as above with $x_s$ and $x_w$ interchanged.\qed 

\begin{theorem1}
It remains to determine $s(O)$ in the case that $O\in P_6$ and (in view of 5.1) $O$ is contained in every 4-subpolytope of $P_6$. Since $P_6$ satisfies Gale's Evenness Condition with $x_1<x_2<x_3<x_4<x_5<x_6$ and $O$ is not contained in any facet of $P_6$, it follows that
\begin{eqnarray*}
O\in [x_1,x_3,x_5]\cap [x_2,x_4,x_6].
\end{eqnarray*}
\end{theorem1}

From LEMMA D and its proof, we have 
\begin{itemize}
\item $\mathcal{V}(P)=\lbrace x_1,x_3,x_5\rbrace \cup \mathcal{V}^2\cup\mathcal{V}^4\cup\mathcal{V}^6$,
\item any hyperplane through  $\langle x_1,x_3,x_5\rangle$ intersects at most two of $\mathcal{V}^2,\mathcal{V}^4$ and $\mathcal{V}^6$, and
\item any $F\in \mathcal{F}(P)$ intersects at most two of  $\mathcal{V}^2,\mathcal{V}^4$ and $\mathcal{V}^6$.
\end{itemize}

Let $\mathcal{W}^{ij}=[\lbrace x_1,x_3,x_5\rbrace \cup\mathcal{V}^i\cup\mathcal{V}^j],i\neq j$ in $\lbrace 2,4,6\rbrace$. Then $[x_1,x_3,x_5]\subset bd(\mathcal{W}^{ij})$, any $F\in \mathcal{F}(P)$ is contained is some $\mathcal{W}^{ij}$, and $s(O)\leq 9$ by LEMMA A. \qed   \  

We conclude with the observation that any linked $P$ with $|\mathcal{V}(P)|\leq 11$ is simply linked, and the problem: Is every linked $P$ also simply linked?

\

\noindent University of Calgary\\
2500 University Dr. N.W\\
Calgary, Canada T2N 1N4\\
e-mail: tbisztri@ucalgary.ca
\end{document}